\newcommand{\mb}{\mathbb}
\newcommand{\mc}{\mathcal}
\newcommand{\lra}{\longrightarrow}
\newcommand{\torus}{\mathbb{T}}
\newcommand{\ntorus}{\mathbb{T}^{n}}
\newcommand{\Zn}{\mb{Z}^{n}}
\newcommand{\Rn}{\mb{R}^{n}}
\numberwithin{equation}{section}
\author{Amandip Sangha}
\title{KK-fibrations arising from Rieffel deformations}
\date{}
\begin{document}
\maketitle

\abstract{
The bundle map $\pi_{h}: \Gamma( (A_{tJ})_{t\in [0,1]} )\lra A_{hJ}$, for every $h\in [0,1]$, of the continuous field $(A_{tJ})_{t\in [0,1]}$ associated to the Rieffel deformation $A_{J}$ of a C*-algebra $A$ is shown to be a KK-equivalence by using a 2-cocycle twisting approach and RKK-fibrations.}
\tableofcontents

\theoremstyle{plain}
\newtheorem{theorem}{Theorem}[section]
\newtheorem{lemma}[theorem]{Lemma}
\newtheorem{corollary}[theorem]{Corollary}
\newtheorem{proposition}[theorem]{Proposition}

\theoremstyle{definition}
\newtheorem{definition}[theorem]{Definition}
\newtheorem{question}[theorem]{Question}
\newtheorem{remark}[theorem]{Remark}

\section{Introduction}
In \cite{Rieffel1} M. A. Rieffel introduced a C*-algebraic framework for deformation quantization whereby a C*-algebra $A$ equipped with an action of $\Rn$ by automorphisms and further supplied with a skew-symmetric $J\in M_{n}(\mb{R})$, produces a C*-algebra $A_{J}$ with multiplication $\times_{J}$, 
often referred to as the \textit{Rieffel deformation} of the original algebra. Several other well-known examples of C*-algebras can be shown to arise in this way.
The K-theory of the deformed algebra was studied in \cite{Rieffel3}, revealing that the deformed algebra $A_{J}$ and the original algebra $A$ have the same K-groups. There, the key technique was to show that $A_{J}$ was strongly Morita equivalent to a certain crossed product of (a stabilization and suspension of) $A$ by $\Rn$, followed by an application of the Connes-Thom result in K-theory, stability and Morita invariance of the K-functor.

Some operator algebraic approaches to deformation quantization use various notions of ``twists'', utilizing an action (e.g. of a group) combined with a distinguished element satisfying some cocyclicity-condition (e.g. a group 2-cocycle) as ingredients towards deforming a given algebra equipped with said action. One such procedure is explored by Kasprzak in \cite{kasprzak} where a locally compact abelian group $G$ acts on a C*-algebra $A$. Given a 2-cocycle $\psi$ on the dual group $\widehat{G}$, there is a method for obtaining a deformed algebra $A^{\psi}$. This procedure encompasses in particular Rieffel deformation as the case $G=\Rn$ with a certain choice of 2-cocycle on $\widehat{\Rn}$. Concerning K-theory, there is an isomorphism $A^{\psi}\rtimes G\cong A\rtimes G$ of crossed products which, for the case $G = \Rn$, when combined with the Connes-Thom result yields an identification of the K-groups of the deformed and undeformed algebras respectively.

The present paper discusses the continuous field over $[0,1]$ of the Rieffel deformation and shows that the evaluation map is a KK-equivalence. Namely, for a C*-algebra $A$ with an action of $\Rn$ and given a skew-symmetric matrix $J$, taking $t\in [0,1]$ and using $tJ$ as the skew-symmetric matrix gives the Rieffel deformation $A_{tJ}$. This will constitute a continuous field $(A_{tJ})_{t\in [0,1]}$ as was already explored in the original monograph \cite{Rieffel1}. We show that the evaluation map of the bundle algebra $\pi_{h}: \Gamma( (A_{tJ})_{t\in [0,1]} )\lra A_{hJ}$, for each $h\in [0,1]$, is a KK-equivalence. To accomplish this we shall employ the deformation approach of Kasprzak and consider a deformed bundle algebra $B^{\psi}$ which will be a $C([0,1])$-algebra equipped with a fibrewise action. As such, RKK-theory naturally enters and we show that $B^{\psi}$ is an \textit{RKK-fibration} in the sense of \cite{Echterhoff-Nest-Oyono-2} by appealing to the fibrewise action and the Connes-Thom result in RKK-theory. The important consequence of being an RKK-fibration here, is that the evaluation map of the $C([0,1])$-algebra becomes a KK-equivalence. Finally, the deformed bundle algebra $B^{\psi}$ will be shown to be $C([0,1])$-linearly *-isomorphic to the bundle algebra  $\Gamma( (A_{tJ})_{t\in [0,1]} )$ of the continuous field of the Rieffel deformation, thus yielding the promised result.

We now give a more specific outline of the paper. Section $\ref{KasprzakApproach}$ explains the approach to deformation taken in \cite{kasprzak}, where one starts with the action of a locally compact abelian group $G$ with a 2-cocycle $\psi$ on the Pontryagin dual $\widehat{G}$. A certain subalgebra $A^{\psi}\subseteq M(A\rtimes G)$ is obtained as the Landstad algebra of the G-product $(A\rtimes G, \lambda, \widehat{\alpha})$. After presenting the basic preliminaries and some of the needed results, we specialize to $G=\Rn$ with our specific 2-cocycle $\psi_{J}$. Section $\ref{sectionBundleAndRKK}$ discusses the relevant bundle and collects a few needed ingredients from \cite{Echterhoff-Nest-Oyono-2} on RKK-fibrations and their relation to KK-equivalences, and then proceeds to establish that the aforementioned bundle is an RKK-fibration. Section $\ref{RieffelField}$ recalls the main notions of Rieffel deformation, the associated continuous field and the relation to the 2-cocycle deformation. The main result regarding the evaluation map of the bundle algebra of the continuous field is then achieved as a consequence of the RKK-fibration laid forth in the preceding section.
In section 5 we comment on the special case called theta deformation, in which the action is not by $\Rn$ but $\ntorus$. There, a different bundle algebra is plausible. Namely, taking a fix-point algebra description of the deformed algebra, we use a strong Morita equivalence to a certain crossed product algebra by the integers and work with an integer crossed product bundle algebra. One is able to show that the related bundle evaluation map has a KK-contractible kernel by applying the Pimsner-Voiculescu six-term exact sequence, so the KK-equivalence follows. Finally we describe the invariance of the index pairing which we understand as a KK-product between elements of the K-group with (in particular) the Fredholm module coming from a spectral triple.

\section{Twisting by a 2-cocycle}\label{KasprzakApproach}
We recall the approach to deformation as in \cite{kasprzak}. The idea is based on twisting a dual C*-dynamical system by a 2-cocycle of the dual group. First we recollect some preliminaries on C*-dynamical systems and $G$-products (cf. \cite{pedersen} \S 7.8). 

\begin{definition} Let $G$ be a locally compact abelian group and $\widehat{G}$ its Pontryagin dual group. Let $B$ be a C*-algebra with a strict-continuous unitary-valued homomorphism $\lambda: G\lra M(B)$, and let $\widehat{\rho}$ be a strongly continuous action $\widehat{\rho}:\widehat{G}\lra Aut(B)$ satisfying
\[ \widehat{\rho}_{\chi}(\lambda_{\gamma}) = \chi(\gamma) \lambda_{\gamma} \]
for all $\chi\in \widehat{G}$ and $\gamma\in G$.
The triple $(B,\lambda,\widehat{\rho})$ is called a \textit{G-product}. One also simply refers to $B$ as a $G$-product when the rest is implicitly understood.
\end{definition}

Given a $G$-product $(B, \lambda,\widehat{\rho})$, one may extend the given unitary representation $\lambda$ to the *-homomorphism $\lambda: C^{*}(G)\lra M(B)$. Using the Fourier transform to identify $C^{*}(G)\cong C_{0}(\widehat{G})$ we write $\lambda: C_{0}(\widehat{G})\lra M(B)$. This map is injective and we often omit $\lambda$ from the notation.

\begin{definition} Let $(B, \lambda,\widehat{\rho})$ be a $G$-product and let $x\in M(B)$. The element $x$ satisfies the \textit{Landstad conditions} if:
\begin{align*}
\mbox{(i)}\,\,& \widehat{\rho}_{\chi}(x) = x \mbox{ for all } \chi\in\widehat{G},\\
\mbox{(ii)}\,\,& \mbox{the map } G\ni\gamma \mapsto \lambda_{\gamma}x\lambda_{\gamma}^{*}\in M(B) \mbox{ is norm continuous},\\
\mbox{(iii)}\,\,& fxg\in B \mbox{ for all } f,g\in C_{0}(\widehat{G}).
\end{align*} 
The set of elements satisfying the Landstad conditions turns out to be a subalgebra in $M(B)$. We shall refer to this subalgebra as the \textit{Landstad algebra} of the $G$-product.
\end{definition}

The foremost example of a $G$-product is produced by the crossed product construction. Indeed, given an abelian C*-dynamical system $(B,G,\alpha)$, the triple $(B\rtimes_{\alpha}G, \lambda, \widehat{\alpha})$ is a $G$-product whose Landstad algebra is precisely $B$. The following result states that any $G$-product arises in this way.

\begin{theorem}\label{theo1}\cite[Theorem 7.8.8]{pedersen} A C*-algebra $B$ is a $G$-product $(B,\lambda,\widehat{\rho})$ if and only if there exists a C*-dynamical system $(C,G,\beta)$ for which $B \cong C\rtimes_{\beta}G$. The C*-dynamical system is unique up to covariant isomorphism, the C*-algebra $C$ is just the associated Landstad algebra and $\beta = Ad\,\lambda$.
\end{theorem}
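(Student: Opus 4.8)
The plan is to treat the two implications separately and then deduce uniqueness. The routine direction assumes $B \cong C\rtimes_{\beta}G$ for some C*-dynamical system $(C,G,\beta)$; transporting structure across the isomorphism, $B$ inherits the canonical strictly continuous unitary homomorphism $\lambda: G\lra M(C\rtimes_{\beta}G)$ together with the dual action $\widehat{\beta}:\widehat{G}\lra Aut(C\rtimes_{\beta}G)$, and the defining relation $\widehat{\beta}_{\chi}(\lambda_{\gamma})=\chi(\gamma)\lambda_{\gamma}$ of the dual action is precisely the compatibility demanded in the definition of a $G$-product. Hence $(B,\lambda,\widehat{\rho})$ is a $G$-product with $\widehat{\rho}$ the image of $\widehat{\beta}$.

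For the substantive direction I would start from an abstract $G$-product $(B,\lambda,\widehat{\rho})$ and let $C\subseteq M(B)$ be its Landstad algebra. The first step is to check that $C$ is a C*-subalgebra of $M(B)$ that is invariant under each inner automorphism $Ad\,\lambda_{\gamma}$: condition (i) is preserved because $\widehat{\rho}_{\chi}$ commutes with $Ad\,\lambda_{\gamma}$ up to the scalar $\chi(\gamma)$, which cancels, and conditions (ii) and (iii) are stable under conjugation by the $\lambda_{\gamma}$. Condition (ii) further guarantees that $\gamma\mapsto\beta_{\gamma}:=Ad\,\lambda_{\gamma}|_{C}$ is strongly continuous, so $(C,G,\beta)$ is a genuine C*-dynamical system with $\beta=Ad\,\lambda$.

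The inclusion $C\hookrightarrow M(B)$ paired with $\lambda:G\lra M(B)$ is then a covariant representation of $(C,G,\beta)$, whose integrated form gives a *-homomorphism $\Phi:C\rtimes_{\beta}G\lra M(B)$, $\Phi(f)=\int_{G}f(\gamma)\lambda_{\gamma}\,d\gamma$ for $f\in C_{c}(G,C)$. It remains to show $\Phi$ is an isomorphism onto $B$. Injectivity follows from equivariance: $\Phi$ intertwines the dual action $\widehat{\beta}$ on $C\rtimes_{\beta}G$ with $\widehat{\rho}$ on $M(B)$, and since $G$ is abelian (hence amenable) the canonical faithful conditional expectation of the crossed product onto $C$ is carried by $\Phi$ to the $\widehat{\rho}$-averaging projection onto the Landstad algebra; as $\Phi$ restricts to the inclusion on $C$, faithfulness of the expectation forces $\Phi$ injective. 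The hard part will be surjectivity, i.e. $\Phi(C\rtimes_{\beta}G)=B$. The essential device is an averaging argument built on the equivariance $\widehat{\rho}_{\chi}(\lambda(f))=\lambda(\tau_{\chi}f)$ for $f\in C_{0}(\widehat{G})$ (translation on $\widehat{G}$) together with condition (iii): for $b\in B$ one spectrally localizes the slices $fbg$ and averages them over $\widehat{G}$ to manufacture elements of the Landstad algebra $C$, and then shows that finite sums $\sum c_{i}\lambda_{\gamma_{i}}$ with $c_{i}\in C$ are dense in $B$. Making these spectral integrals converge and verifying the density is the crux of the whole theorem.

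For uniqueness up to covariant isomorphism, note that the construction recovers $C$ intrinsically from the $G$-product as the Landstad algebra, and recovers $\lambda$ and $\beta=Ad\,\lambda$ from the same data; consequently any two C*-dynamical systems whose crossed products realize the given $G$-product structure must coincide under these canonical identifications, which is exactly the claimed covariant isomorphism.
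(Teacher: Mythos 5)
The paper does not actually prove this statement: it is Landstad's duality theorem, quoted from Pedersen \cite[Theorem 7.8.8]{pedersen} and used as a black box, so there is no in-paper argument to measure yours against. Your outline does follow the standard Landstad--Pedersen route: the easy direction via the dual action $\widehat{\beta}$ on $C\rtimes_{\beta}G$, and the converse via the Landstad algebra $C$, the action $\beta=Ad\,\lambda$, and the integrated form $\Phi$ of the covariant pair given by $C\hookrightarrow M(B)$ and $\lambda$.

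As a proof, however, the proposal has a genuine gap which you in effect concede. The surjectivity of $\Phi$ --- equivalently, that the Landstad conditions produce \emph{enough} elements, so that $\overline{\mathrm{span}}\{c\,\lambda(f)\,:\,c\in C,\ f\in C_{c}(G)\}=B$ --- is the entire content of the theorem, and you only name the averaging device without carrying it out. The obstruction is that $\widehat{G}$ is in general non-compact (e.g.\ $G=\Rn$), so ``averaging over $\widehat{G}$'' is not a literal normalized integral; one must show that suitably regularized integrals of the form $\int_{\widehat{G}}\widehat{\rho}_{\chi}\bigl(\lambda(f)\,b\,\lambda(g)\bigr)\,d\chi$ converge strictly, land in $C$, and together with the $\lambda_{\gamma}$ generate a dense subspace of $B$. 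The same issue undercuts your injectivity argument: the ``$\widehat{\rho}$-averaging projection onto the Landstad algebra'' you invoke does not exist as a bounded averaging operator when $\widehat{G}$ is non-compact, so faithfulness of the canonical expectation $C\rtimes_{\beta}G\lra C$ has to be transported through $\Phi$ by a spectral-subspace argument rather than by a literal average. These are precisely the points where Landstad's and Pedersen's proofs spend their effort; what you have is a correct roadmap of that proof, not a proof.
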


Recall that a \textit{2-cocycle} $\psi$ on the abelian group $\widehat{G}$ is a continuous function 
\[ \psi: \widehat{G}\times\widehat{G}\lra \mb{T} \]
satisfying
\renewcommand{\labelenumi}{(\roman{enumi})}
\begin{enumerate}
\item $\psi(e,\chi) = \psi(\chi,e) = 1$ for all $\chi\in\widehat{G}$,
\item $\psi(\chi_{1}, \chi_{2}+\chi_{3})\psi(\chi_{2},\chi_{3}) = \psi(\chi_{1}+\chi_{2}, \chi_{3})\psi(\chi_{1},\chi_{2})$ for all $\chi_{1},\chi_{2},\chi_{3}\in\widehat{G}$
\end{enumerate}

Given an element $\chi\in\widehat{G}$, define the function $\psi_{\chi} \in C_{b}(\widehat{G})$ by 
\[ \psi_{\chi}(\widehat{\sigma}) = \psi(\chi,\widehat{\sigma}) \mbox{ for } \widehat{\sigma}\in\widehat{G}. \]
Observing that $C_{b}(\widehat{G}) = M(C_{0}(\widehat{G}))$, use the obvious extension $\lambda: C_{b}(\widehat{G}) \lra M(B)$ and obtain unitaries
\begin{equation}\label{unitaries}
U_{\chi} = \lambda(\psi_{\chi}) \in M(B).
\end{equation}
The 2-cocycle condition for $\psi$ implies the following commutation rule for these unitaries
\[ U_{\chi_{1}+\chi_{2}} = \bar{\psi}(\chi_{1},\chi_{2})U_{\chi_{1}}\widehat{\rho}_{\chi_{1}}(U_{\chi_{2}}) .\]

\begin{lemma}\label{lemma1}\cite[Theorem 3.1]{kasprzak} Let $(B,\lambda,\widehat{\rho})$ be a $G$-product and $\psi$ a 2-cocycle on $\widehat{G}$. Use the unitaries of $(\ref{unitaries})$ to define the strongly continuous action $\widehat{\rho}^{\psi}: \widehat{G}\lra Aut(B)$,
\[ \widehat{\rho}^{\psi}_{\chi}(b) = U_{\chi}^{*}\widehat{\rho}_{\chi}(b)U_{\chi} \]
for $\chi\in\widehat{G}$ and $b\in B$. Then $(B, \lambda, \widehat{\rho}^{\psi})$ is a $G$-product.
\end{lemma}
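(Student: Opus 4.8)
The plan is to check directly the three requirements in the definition of a $G$-product for the triple $(B,\lambda,\widehat{\rho}^{\psi})$: that each $\widehat{\rho}^{\psi}_{\chi}$ is an automorphism of $B$; that $\chi\mapsto\widehat{\rho}^{\psi}_{\chi}$ is a strongly continuous action of $\widehat{G}$; and that the covariance identity $\widehat{\rho}^{\psi}_{\chi}(\lambda_{\gamma})=\chi(\gamma)\lambda_{\gamma}$ holds. The unitary representation $\lambda$ is left untouched, so it remains a strict-continuous unitary homomorphism. Throughout, the decisive structural observation is that $\widehat{\rho}_{\chi}$ carries the image $\lambda(C_{b}(\widehat{G}))$ into itself — this is immediate from $\widehat{\rho}_{\chi}(\lambda_{\gamma})=\chi(\gamma)\lambda_{\gamma}$ after passing to $C^{*}(G)\cong C_{0}(\widehat{G})$ and extending strictly to multipliers — so that $U_{\chi}$ and every $\widehat{\rho}_{\chi'}(U_{\chi''})$ lie in the \emph{abelian} subalgebra $\lambda(C_{b}(\widehat{G}))\subseteq M(B)$ and therefore commute with one another.

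That each $\widehat{\rho}^{\psi}_{\chi}$ maps $B$ into $B$ is a routine consequence of $B$ being an ideal in $M(B)$: $\widehat{\rho}_{\chi}(b)\in B$ and $U_{\chi}\in M(B)$ force $U_{\chi}^{*}\widehat{\rho}_{\chi}(b)U_{\chi}\in B$. The normalization $\psi(e,\cdot)=1$ gives $U_{e}=\lambda(1)=1$ and hence $\widehat{\rho}^{\psi}_{e}=\mathrm{id}$, so once the homomorphism property is in hand each $\widehat{\rho}^{\psi}_{\chi}$ is invertible. For that homomorphism property I would compute, using that $\widehat{\rho}_{\chi_{1}}$ is a $*$-automorphism and that $\widehat{\rho}_{\chi_{1}}\widehat{\rho}_{\chi_{2}}=\widehat{\rho}_{\chi_{1}+\chi_{2}}$,
\[
\widehat{\rho}^{\psi}_{\chi_{1}}\bigl(\widehat{\rho}^{\psi}_{\chi_{2}}(b)\bigr)=V^{*}\,\widehat{\rho}_{\chi_{1}+\chi_{2}}(b)\,V,\qquad V:=\widehat{\rho}_{\chi_{1}}(U_{\chi_{2}})U_{\chi_{1}}.
\]
By the observation above the two factors of $V$ commute, so the commutation rule $U_{\chi_{1}+\chi_{2}}=\bar{\psi}(\chi_{1},\chi_{2})U_{\chi_{1}}\widehat{\rho}_{\chi_{1}}(U_{\chi_{2}})$ gives $V=\psi(\chi_{1},\chi_{2})U_{\chi_{1}+\chi_{2}}$; the scalar has modulus one and cancels in the conjugation, yielding $\widehat{\rho}^{\psi}_{\chi_{1}}\widehat{\rho}^{\psi}_{\chi_{2}}=\widehat{\rho}^{\psi}_{\chi_{1}+\chi_{2}}$.

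For the covariance condition I would use that, under the Fourier identification, $\lambda_{\gamma}$ is itself $\lambda$ applied to the bounded character function $\widehat{\sigma}\mapsto\widehat{\sigma}(\gamma)$, so that $\lambda_{\gamma}\in\lambda(C_{b}(\widehat{G}))$ and hence commutes with $U_{\chi}$. Then
\[
\widehat{\rho}^{\psi}_{\chi}(\lambda_{\gamma})=U_{\chi}^{*}\widehat{\rho}_{\chi}(\lambda_{\gamma})U_{\chi}=\chi(\gamma)\,U_{\chi}^{*}\lambda_{\gamma}U_{\chi}=\chi(\gamma)\lambda_{\gamma},
\]
as required.

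The remaining, and in my view the most delicate, point is the strong continuity of $\chi\mapsto\widehat{\rho}^{\psi}_{\chi}(b)$. Here I would argue that joint continuity of $\psi$ makes $\chi\mapsto\psi_{\chi}$ continuous for the strict topology on $C_{b}(\widehat{G})=M(C_{0}(\widehat{G}))$ — for $f\in C_{0}(\widehat{G})$ the difference $(\psi_{\chi}-\psi_{\chi_{0}})f$ is controlled uniformly by the behaviour of $\psi$ on the compact set where $f$ is essentially supported — whence $U_{\chi}\to U_{\chi_{0}}$ strictly in $M(B)$. Combining this with the strong continuity of the original action $\widehat{\rho}$ and a standard triangle-inequality estimate on $U_{\chi}^{*}\widehat{\rho}_{\chi}(b)U_{\chi}$ gives the norm continuity of $\chi\mapsto\widehat{\rho}^{\psi}_{\chi}(b)$. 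The main obstacle is precisely this interplay of topologies: one must ensure that the strict convergence of the multipliers $U_{\chi}$ upgrades to norm convergence after multiplying against the fixed element $b\in B$, which is exactly what strictness delivers.
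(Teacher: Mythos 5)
The paper does not prove this lemma at all: it is quoted as Theorem 3.1 of Kasprzak's paper \cite{kasprzak} and used as a black box, so there is no internal proof to compare against. Your direct verification is correct and self-contained given the commutation rule $U_{\chi_{1}+\chi_{2}}=\bar{\psi}(\chi_{1},\chi_{2})U_{\chi_{1}}\widehat{\rho}_{\chi_{1}}(U_{\chi_{2}})$, which the paper records just before the lemma. The two load-bearing observations are exactly the ones you isolate: first, that $\widehat{\rho}_{\chi}$ acts on $\lambda(C_{0}(\widehat{G}))$ by translation of the Fourier variable (since $\widehat{\rho}_{\chi}(\lambda(f))=\lambda(\chi f)$ for $f\in L^{1}(G)$) and hence preserves the commutative subalgebra $\lambda(C_{b}(\widehat{G}))$ after strict extension, which is what lets you rewrite $V=\widehat{\rho}_{\chi_{1}}(U_{\chi_{2}})U_{\chi_{1}}$ as $\psi(\chi_{1},\chi_{2})U_{\chi_{1}+\chi_{2}}$ and also gives $[U_{\chi},\lambda_{\gamma}]=0$ for the covariance identity; second, the continuity argument, where your passage from strict convergence $\psi_{\chi}\to\psi_{\chi_{0}}$ in $M(C_{0}(\widehat{G}))$ to strict convergence $U_{\chi}\to U_{\chi_{0}}$ in $M(B)$ silently uses that the integrated form $\lambda:C_{0}(\widehat{G})\to M(B)$ is nondegenerate, so that its extension to multipliers is strictly continuous on bounded sets; this is standard for integrated forms of strictly continuous unitary representations but is worth stating, since it is the only place the argument could conceivably break. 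With that one remark added, the proof is complete.
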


\begin{definition}[Kasprzak deformation]  Let $A$ be a separable C*-algebra with strongly continuous action $\alpha: G\lra Aut(A)$ of the locally compact abelian group $G$, and $\psi$ a 2-cocycle on $\widehat{G}$. The $G$-product $(A\rtimes_{\alpha}G,\lambda,\widehat{\alpha})$ gives rise to the $G$-product $(A\rtimes_{\alpha}G,\lambda,\widehat{\alpha}^{\psi})$ by Lemma $\ref{lemma1}$. The deformed algebra $A^{\psi}$ is by definition the Landstad algebra of the $G$-product $(A\rtimes_{\alpha}G,\lambda,\widehat{\alpha}^{\psi})$.
\end{definition}

An interesting result is obtained by considering the original action on the deformed algebra. Denote by $\alpha^{\psi}: G\lra Aut(A^{\psi})$ the action $\alpha_{g}^{\psi}(x) = \lambda_{g}x\lambda_{g}^{*}$. If we consider the crossed product of the C*-dynamical system $(A^{\psi}, G, \alpha^{\psi})$ we get
\begin{lemma}\label{KasprzakLandstadTheorem} $A^{\psi}\rtimes_{\alpha^{\psi}}G \cong A\rtimes_{\alpha}G$.
\end{lemma}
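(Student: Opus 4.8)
The plan is to read off the isomorphism directly from the duality of Theorem \ref{theo1}, applied not to the original $G$-product but to its twist. The argument hinges on one structural observation: passing from $(A\rtimes_{\alpha}G, \lambda, \widehat{\alpha})$ to $(A\rtimes_{\alpha}G, \lambda, \widehat{\alpha}^{\psi})$ via Lemma \ref{lemma1} alters only the dual $\widehat{G}$-action, leaving the ambient C*-algebra $A\rtimes_{\alpha}G$ and the homomorphism $\lambda: G\lra M(A\rtimes_{\alpha}G)$ untouched, the twisting unitaries $U_{\chi}=\lambda(\psi_{\chi})$ being themselves manufactured from $\lambda$. Thus the two $G$-products share the same $\lambda$ and the same underlying algebra.

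First I would record that $(A\rtimes_{\alpha}G, \lambda, \widehat{\alpha}^{\psi})$ is genuinely a $G$-product, which is exactly Lemma \ref{lemma1}, so that by the Kasprzak definition its Landstad algebra $A^{\psi}\subseteq M(A\rtimes_{\alpha}G)$ is well-defined. Next I would check that $\alpha^{\psi}_{g}(x)=\lambda_{g}x\lambda_{g}^{*}$ defines a strongly continuous action of $G$ on $A^{\psi}$: invariance of $A^{\psi}$ under $\mathrm{Ad}\,\lambda_{g}$ is part of the $G$-product structure (for Landstad condition (i) one uses that $G$ is abelian together with $\widehat{\alpha}^{\psi}_{\chi}(\lambda_{g})=\chi(g)\lambda_{g}$), while point-norm continuity of $g\mapsto\alpha^{\psi}_{g}(x)$ is precisely Landstad condition (ii). This makes $(A^{\psi}, G, \alpha^{\psi})$ a bona fide C*-dynamical system and the crossed product $A^{\psi}\rtimes_{\alpha^{\psi}}G$ meaningful.

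The heart of the matter is then a single application of Theorem \ref{theo1} to the $G$-product $(A\rtimes_{\alpha}G, \lambda, \widehat{\alpha}^{\psi})$. That theorem furnishes a covariant isomorphism $A\rtimes_{\alpha}G\cong C\rtimes_{\beta}G$, where $C$ is the associated Landstad algebra and $\beta=\mathrm{Ad}\,\lambda$. Since here $C=A^{\psi}$ and the restriction of $\mathrm{Ad}\,\lambda_{g}$ to $A^{\psi}$ is by definition $\alpha^{\psi}_{g}$, the reconstructed system is exactly $(A^{\psi}, G, \alpha^{\psi})$, and we obtain $A^{\psi}\rtimes_{\alpha^{\psi}}G\cong A\rtimes_{\alpha}G$.

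All the genuine analytic weight is carried by Theorem \ref{theo1}, whose non-trivial direction reconstructs a crossed product from abstract Landstad data; granting that, the present lemma is a formal consequence. Accordingly I do not anticipate a serious obstacle beyond the bookkeeping that the twist leaves $\lambda$ fixed, which is what allows the same $\lambda$ to identify $\beta$ with $\alpha^{\psi}$ and thereby pin down the C*-dynamical system on the nose.
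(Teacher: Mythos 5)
Your proposal is correct and follows essentially the same route as the paper: both arguments apply Theorem \ref{theo1} to the twisted $G$-product $(A\rtimes_{\alpha}G,\lambda,\widehat{\alpha}^{\psi})$, identify its Landstad algebra as $A^{\psi}$ and the reconstructed action $\mathrm{Ad}\,\lambda$ as $\alpha^{\psi}$, and read off the isomorphism. Your additional checks that $\alpha^{\psi}$ is a well-defined strongly continuous action are sensible bookkeeping that the paper leaves implicit.
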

\begin{proof} The proof is a literal application of Theorem $\ref{theo1}$. Indeed, let $B = A\rtimes_{\alpha}G$ and consider the $G$-product $(B, \lambda, \widehat{\alpha}^{\psi}) = (A\rtimes_{\alpha}G, \lambda, \widehat{\alpha}^{\psi})$. The Landstad algebra of this $G$-product is what we have called $A^{\psi}$ by definition, which is the algebra $C = A^{\psi}$ referred to in Theorem $\ref{theo1}$. Furthermore $\alpha^{\psi} = Ad\,\lambda$, which is the action $\beta$ in that theorem. In other words the C*-dynamical system is $(C,G,\beta) = (A^{\psi}, G, \alpha^{\psi})$ and the theorem yields the isomorphism $B \cong C\rtimes_{\beta}G$, in our case $A\rtimes_{\alpha}G \cong A^{\psi}\rtimes_{\alpha^{\psi}}G$ as claimed.

Following \cite{pedersen} and \cite{Landstad}, we may further describe the *-isomorphism $A^{\psi}\rtimes_{\alpha^{\psi}}G \lra A\rtimes_{\alpha}G$ as mapping $y\otimes g \mapsto y\lambda_{g}$, for $y\in A^{\psi}$ and $g\in C_{c}(G)$.
\end{proof}

Let our separable C*-algebra $A$ be equipped with a strongly continuous action $\sigma:\Rn\lra Aut(A)$, and let $J\in M_{n}(\mb{R})$ be a skew-symmetric matrix. On $\Rn$ we consider the symmetric bicharacter
\[ e: \Rn\times\Rn\lra\torus \]
\[ e(u,v) = e^{2\pi i\, u\cdot v} \]
which gives the group isomorphism $\Rn \cong \widehat{\Rn}$ by $u\mapsto e^{1}_{u}$ where $e^{1}_{u}(v) = e(u,v)$. 
We use the 2-cocycle $\psi_{J}: \widehat{\Rn}\times\widehat{\Rn}\lra \torus$,
\begin{equation}\label{2cocycleEq}
\psi_{J}(e^{1}_{u},e^{1}_{v}) = e^{1}_{u}(J v) = e(u, Jv) = e^{2\pi i\, u\cdot Jv}.
\end{equation}
By Lemma $\ref{lemma1}$ the $\Rn$-product $(A\rtimes_{\sigma}\Rn, \lambda, \widehat{\sigma})$ combined with the 2-cocycle $\psi_{J}$ gives the $\Rn$-product $(A\rtimes_{\sigma}\Rn, \lambda, \widehat{\sigma}^{\psi_{J}})$, and the deformed algebra $A^{\psi_{J}}$ is the corresponding Landstad algebra.

\section{Bundle structure and RKK-fibration}\label{sectionBundleAndRKK}
Let $X$ be a locally compact Hausdorff space. A C*-algebra $B$ is called a $C_{0}(X)$-algebra (cf. \cite[1.5]{Kasparov1}) if there is a non-degenerate *-homomorphism $\Phi_{B}: C_{0}(X) \lra ZM(B)$. One also writes $fb = \Phi_{B}(f)b$, for $f\in C_{0}(X)$ and $b\in B$. For each $x\in X$, letting $I_{x} = \{f\in C_{0}(X):\, f(x) = 0\}$ be the ideal of functions vanishing at $x$, then $I_{x}B\subseteq B$ is an ideal and the quotient $B_{x} = B/(I_{x}B)$ is called the \textit{fiber} over $x$. The quotient map $q_{x}:B\lra B_{x}$ is also referred to as \textit{evaluation at} $x$.

Recall that we are considering a strongly continuous action $\sigma: \Rn\lra Aut(A)$ on a separable C*-algebra $A$, and a real skew-symmetric matrix $J$.
Let $B = C([0,1])\otimes A = C([0,1],A)$ be equipped with the obvious $C([0,1])$-algebra structure $\Phi_{B}: C([0,1])\lra ZM(B)$, $\Phi_{B}(f)(g\otimes a) = fg\otimes a$. Define the action $\beta:\Rn\lra Aut(B)$ 
\begin{equation}\label{actionBeta}
\beta_{x}(y)(s) = \sigma_{\sqrt{s}x}(y(s)),
\end{equation}
for $x\in\Rn$, $y\in B$, $s\in [0,1]$. Let $\psi = \psi_{J}$ be the 2-cocycle from $(\ref{2cocycleEq})$. Then the 2-cocycle deformation $B^{\psi}$ is by definition the Landstad algebra of the $\Rn$-product $(B\rtimes_{\beta}\Rn, \lambda, \widehat{\beta}^{\psi})$. Recall the action $\beta^{\psi}:\Rn\lra Aut(B^{\psi})$ from the remark preceding Lemma $\ref{KasprzakLandstadTheorem}$.

\begin{lemma} The deformed algebra $B^{\psi}$ is a $C([0,1])$-algebra and the action $\beta^{\psi}:\Rn\lra Aut(B^{\psi})$ is fiberwise. There is a $C([0,1])$-linear *-isomorphism
\[ B^{\psi}\rtimes_{\beta^{\psi}}\Rn \lra B\rtimes_{\beta}\Rn. \]
\end{lemma}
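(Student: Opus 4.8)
The plan is to isolate a single structural fact and reuse it for all three assertions: the action $\beta$ is $C([0,1])$-linear, and hence fixes the central subalgebra $\Phi_{B}(C([0,1]))\subseteq ZM(B)$. Indeed, for $f\in C([0,1])$, $y\in B$ and $s\in[0,1]$ one computes $\beta_{x}(\Phi_{B}(f)y)(s)=\sigma_{\sqrt{s}x}(f(s)y(s))=f(s)\,\beta_{x}(y)(s)$, so that $\beta_{x}(\Phi_{B}(f))=\Phi_{B}(f)$ for every $x\in\Rn$. Since $\beta=Ad\,\lambda$ on the crossed product, this gives $\lambda_{\gamma}\Phi_{B}(f)\lambda_{\gamma}^{*}=\beta_{\gamma}(\Phi_{B}(f))=\Phi_{B}(f)$, i.e. $[\Phi_{B}(f),\lambda_{\gamma}]=0$ for all $\gamma$. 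Consequently $\Phi_{B}(f)$ commutes with the entire image $\lambda(C_{0}(\widehat{\Rn}))$ and with every unitary $U_{\chi}=\lambda(\psi_{\chi})$; moreover the dual action $\widehat{\beta}$ fixes the Landstad algebra $B$, hence fixes $\Phi_{B}(f)$ as a multiplier. Non-degeneracy being automatic since $C([0,1])$ is unital, $\Phi_{B}$ realizes $C([0,1])$ as a central subalgebra of $M(B\rtimes_{\beta}\Rn)$, which is therefore a $C([0,1])$-algebra.

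For the first assertion I would check that $\Phi_{B}(f)x\in B^{\psi}$ whenever $x\in B^{\psi}$, by verifying the three Landstad conditions. For (i), $\widehat{\beta}^{\psi}_{\chi}(\Phi_{B}(f)x)=\widehat{\beta}^{\psi}_{\chi}(\Phi_{B}(f))\,\widehat{\beta}^{\psi}_{\chi}(x)$, where $\widehat{\beta}^{\psi}_{\chi}(\Phi_{B}(f))=U_{\chi}^{*}\widehat{\beta}_{\chi}(\Phi_{B}(f))U_{\chi}=\Phi_{B}(f)$ by the commutations above and $\widehat{\beta}^{\psi}_{\chi}(x)=x$. For (ii), $\lambda_{\gamma}\Phi_{B}(f)x\lambda_{\gamma}^{*}=\Phi_{B}(f)\,\lambda_{\gamma}x\lambda_{\gamma}^{*}$ is norm continuous in $\gamma$, since $x$ satisfies (ii) and $\Phi_{B}(f)$ is a fixed bounded multiplier. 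For (iii), $g_{1}\Phi_{B}(f)xg_{2}=\Phi_{B}(f)\,g_{1}xg_{2}\in B\rtimes_{\beta}\Rn$. Thus $C([0,1])$ multiplies $B^{\psi}$ into itself and lands centrally, giving $B^{\psi}$ its $C([0,1])$-structure. Fiberwiseness of $\beta^{\psi}$ is the same commutation once more: $\beta^{\psi}_{g}(\Phi_{B}(f)x)=\lambda_{g}\Phi_{B}(f)x\lambda_{g}^{*}=\Phi_{B}(f)\,\beta^{\psi}_{g}(x)$, so $\beta^{\psi}$ is $C([0,1])$-linear and descends to an action on each fiber $B^{\psi}_{s}$.

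For the $*$-isomorphism I would specialize Lemma \ref{KasprzakLandstadTheorem}, with $A,\alpha$ replaced by $B,\beta$, which already yields a $*$-isomorphism $B^{\psi}\rtimes_{\beta^{\psi}}\Rn\lra B\rtimes_{\beta}\Rn$ described concretely by $y\otimes g\mapsto y\lambda_{g}$. It then remains only to confirm $C([0,1])$-linearity: extending the map to multipliers, the central element $\Phi_{B^{\psi}}(f)$ is simply the restriction of $\Phi_{B}(f)$, and the concrete formula sends $\Phi_{B^{\psi}}(f)(y\otimes g)=(\Phi_{B}(f)y)\otimes g$ to $\Phi_{B}(f)y\lambda_{g}=\Phi_{B}(f)(y\lambda_{g})$, so the isomorphism intertwines the two $C([0,1])$-structures. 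The main obstacle I anticipate is purely bookkeeping: making precise that the central copy of $C([0,1])$ survives each construction — the crossed product and the passage to the Landstad subalgebra inside the multiplier algebra — and that $\Phi_{B^{\psi}}(f)$ is genuinely carried to $\Phi_{B}(f)$ under the multiplier extension of the isomorphism. All of this hinges on the single commutation $[\Phi_{B}(f),\lambda_{\gamma}]=0$ established at the outset.
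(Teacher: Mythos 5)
Your proposal is correct and follows essentially the same route as the paper: establish that $\Phi_{B}(f)$ is a central, $\widehat{\beta}$-invariant multiplier of $B\rtimes_{\beta}\Rn$ commuting with the $U_{\chi}$, deduce the $C([0,1])$-structure on $B^{\psi}$ and the fiberwiseness of $\beta^{\psi}$ from $[\Phi_{B}(f),\lambda_{\gamma}]=0$, and verify $C([0,1])$-linearity of the Landstad isomorphism on the explicit formula $y\otimes g\mapsto y\lambda_{g}$. Your explicit check of all three Landstad conditions for $\Phi_{B}(f)x$ is in fact slightly more careful than the paper's, which only records invariance under $\widehat{\beta}^{\psi}$ and centrality.
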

\begin{proof} Clearly the action $\beta$ on $B$ is $C([0,1])$-linear, i.e. for every $x\in\Rn$, $\beta_{x}(\Phi_{B}(f)y) = \Phi_{B}(f)\beta_{x}(y)$ for every $f\in C([0,1])$ and $y\in B$. This entails that $\Phi_{B\rtimes_{\beta}\Rn}: C([0,1])\lra ZM(B\rtimes_{\beta}\Rn)$ given by $(\Phi_{B\rtimes_{\beta}\Rn}(f)y)(v) = \Phi_{B}(f)(y(v))$ gives a $C([0,1])$-algebra structure on $B\rtimes_{\beta}\Rn$. Concerning the dual action $\widehat{\beta}:\widehat{\Rn}\lra Aut(B\rtimes_{\beta}\Rn)$, for each $w\in\widehat{\Rn}$ the canonically extended automorphism $\widehat{\beta}_{w}: M(B\rtimes_{\beta}\Rn) \lra M(B\rtimes_{\beta}\Rn)$ satisfies $\widehat{\beta}_{w}(\Phi_{B\rtimes_{\beta}\Rn}(f)) = \Phi_{B\rtimes_{\beta}\Rn}(f)$ for every $f\in C([0,1])$. It then follows
that for any $y\in M(B\rtimes_{\beta}\Rn)$, 
\begin{align*}
\widehat{\beta}^{\psi}_{w}(\Phi_{B\rtimes_{\beta}\Rn}(f)) &= U_{w}^{*}\widehat{\beta}_{w}(\Phi_{B\rtimes_{\beta}\Rn}(f))U_{w} = U_{w}^{*}\Phi_{B\rtimes_{\beta}\Rn}(f)U_{w} \\
&= \Phi_{B\rtimes_{\beta}\Rn}(f)U^{*}_{w}U_{w} = \Phi_{B\rtimes_{\beta}\Rn}(f),
\end{align*}
i.e. $\Phi_{B\rtimes_{\beta}\Rn}( C([0,1]) ) \subseteq M(B\rtimes_{\beta}\Rn)^{\widehat{\beta}^{\psi}} = B^{\psi}$. Combined with the fact that $\Phi_{B\rtimes_{\beta}\Rn}( C([0,1]) ) \subseteq ZM(B\rtimes_{\beta}\Rn)$, this entails that we may define $\Phi_{B^{\psi}} = \Phi_{B\rtimes_{\beta}\Rn}$ to obtain a $C([0,1])$-algebra structure on $B^{\psi}$.

The action $\beta^{\psi}:\Rn\lra Aut(B^{\psi})$ is $\beta^{\psi}_{x}(y) = \lambda_{x}y\lambda_{x}^{*}$, for $y\in B^{\psi}$, and so
\begin{align*}
\beta_{x}^{\psi}(\Phi_{B^{\psi}}(f)y) &= \beta_{x}^{\psi}(\Phi_{B\rtimes_{\beta}\Rn}(f)y) = \lambda_{x}(\Phi_{B\rtimes_{\beta}\Rn}(f)y)\lambda_{x}^{*} \\
&= \Phi_{B\rtimes_{\beta}\Rn}(f)\lambda_{x}y\lambda_{x}^{*} =  \Phi_{B^{\psi}}(f)\beta_{x}^{\psi}(y),
\end{align*}
i.e. the action $\beta^{\psi}$ is fiberwise and hence naturally makes the crossed product $B^{\psi}\rtimes_{\beta^{\psi}}\Rn$ a $C([0,1])$-algebra where $\Phi_{B^{\psi} \rtimes_{\beta^{\psi}}\Rn}:C([0,1])\lra ZM(B^{\psi} \rtimes_{\beta^{\psi}}\Rn)$ is given by the composition of $\Phi_{B^{\psi}}$ with the inclusion $M(B^{\psi})\subseteq M(B^{\psi} \rtimes_{\beta^{\psi}}\Rn)$. By Lemma $\ref{KasprzakLandstadTheorem}$
\begin{equation}\label{thirdProp}
B^{\psi} \rtimes_{\beta^{\psi}}\Rn \cong B\rtimes_{\beta}\Rn,
\end{equation}
and we claim this *-isomorphism to be $C([0,1])$-linear. Indeed, denote this *-isomorphism $S: B^{\psi} \rtimes_{\beta^{\psi}}\Rn \lra B\rtimes_{\beta}\Rn$, which by Lemma $\ref{KasprzakLandstadTheorem}$ can be described as $S(y\otimes g) = y\lambda_{g}$ for $y\in B^{\psi}$ and $g\in C_{c}(\Rn)$, and it follows that
\begin{align*}
S(\Phi_{B^{\psi}\rtimes_{\beta^{\psi}}\Rn}(f)(y\otimes g)) &= S(\Phi_{B^{\psi}}(f)y\otimes g) \\
&= \Phi_{B^{\psi}}(f)y\lambda_{g} \\
&= \Phi_{B\rtimes_{\beta}\Rn}(f)y\lambda_{g} \\
&= \Phi_{B\rtimes_{\beta}\Rn}(f)S(y\otimes g)
\end{align*}
 for any $f\in C([0,1])$, i.e. $S\circ \Phi_{B^{\psi}\rtimes_{\beta^{\psi}}\Rn} = \Phi_{B\rtimes_{\beta}\Rn}\circ S$.
\end{proof}

Let $f: Y\lra X$ be a continuous map between locally compact spaces. The pullback construction gives a $C_{0}(X)$-algebra structure on $C_{0}(Y)$, since $f^{*}: C_{0}(X) \lra C_{b}(Y)$ and $C_{b}(Y) = ZM(C_{0}(Y))$, we let $\Phi_{C_{0}(Y)}: C_{0}(X) \lra ZM(C_{0}(Y))$, $\Phi_{C_{0}(Y)}(k) = f^{*}(k)$ be the pointwise multiplication operator by the pullback
\[ \Phi_{C_{0}(Y)}(k)h = f^{*}(k)h \]
for $k\in C_{0}(X)$, $h\in C_{0}(Y)$.

Given a $C_{0}(X)$-algebra $B$, a locally compact space $Y$ and $f:Y\lra X$ a continuous map, the \textit{pullback} $f^{*}(B)$ of $B$ along $f$ is the $C_{0}(Y)$-algebra
\begin{equation}\label{pullbackDef}
f^{*}(B) = C_{0}(Y)\otimes_{C_{0}(X)}B.
\end{equation}
The balanced tensor product in $(\ref{pullbackDef})$ is by definition the quotient of $C_{0}(Y)\otimes B$ by the ideal generated by 
\[ \{ \Phi_{C_{0}(Y)}(k)g\otimes b - g\otimes \Phi_{B}(k)b \, \big| \quad g\in C_{0}(Y), \, b\in B, \, k\in C_{0}(X) \}. \]
The $C_{0}(Y)$-algebra structure on $f^{*}(B)$ is pointwise multiplication on the left, $\Phi_{f^{*}(B)}: C_{0}(Y) \lra ZM(f^{*}(B))$, $\Phi_{f^{*}(B)}(h)(g\otimes b) = hg\otimes b$, for $h, g\in C_{0}(Y)$ and $b\in B$. 
Note that the fiber $f^{*}(B)_{y}$ over $y\in Y$ is $B_{f(y)}$. Indeed, as in the balanced tensor product one has $I_{y}C_{0}(Y)\otimes_{C_{0}(X)}B = C_{0}(Y)\otimes_{C_{0}(X)}I_{f(y)}B$, then
\begin{align*}
 f^{*}(B)_{y} &= C_{0}(Y)\otimes_{C_{0}(X)}B/ I_{y}C_{0}(Y)\otimes_{C_{0}(X)}B \\
&= C_{0}(Y)\otimes_{C_{0}(X)}B/ C_{0}(Y)\otimes_{C_{0}(X)}I_{f(y)}B \\
&= B/ I_{f(y)}B = B_{f(y)}.
\end{align*}

Recall that given two graded, separable C*-algebras $A$ and $B$, the group $KK(A,B)$ is the set of Kasparov $A$-$B$-modules (also called Kasparov cycles) modulo an appropriate equivalence relation (e.g. homotopy equivalence). Briefly, a Kasparov $A$-$B$-module is a triple $(E,\phi,F)$ where $E$ is a countably generated right Hilbert $B$-module, $\phi: A\lra \mc{L}_{B}(E)$ is a *-homomorphism and $F\in \mc{L}_{B}(E)$ is a degree 1 operator such that $[F,\phi(a)]$, $(F^{2}-1)\phi(a)$ and $(F-F^{*})\phi(a)$ are elements of $\mc{K}_{B}(E)$ for any $a\in A$. 

The \textit{KK-product} is a bilinear map
\[ KK(A,D)\times KK(D,B) \lra KK(A,B) \]
\[ (\textbf{x},\textbf{y}) \mapsto \textbf{x}\textbf{y} \]
where $A$, $B$ and $D$ are separable (and $D$ is $\sigma$-unital) C*-algebras. There is a multiplicatively neutral element $\textbf{1}_{D}= [(D, id, 0)] \in KK(D,D)$ such that for any $\textbf{x}\in KK(A,D)$ and $\textbf{y}\in KK(D,B)$ one has $\textbf{x}\textbf{1}_{D} = \textbf{x}$ and $\textbf{1}_{D}\textbf{y} = \textbf{y}$.

An element $\textbf{x}\in KK(A,B)$ is called a \textit{KK-equivalence} if it is invertible with respect to the KK-product, i.e. if there exists an element $\textbf{y}\in KK(B,A)$ such that $\textbf{xy} = \textbf{1}_{A} \in KK(A,A)$ and $\textbf{yx} = \textbf{1}_{B}\in KK(B,B)$. 

Given a graded *-homomorphism $\phi: A\lra B$, then $(B,\phi,0)$ is the naturally associated Kasparov $A$-$B$-module. We say $\phi$ is a KK-equivalence if the corresponding element $[(B,\phi,0)]\in KK(A,B)$ is a KK-equivalence.

Regarding $C_{0}(X)$-algebras there is a further refinement of the KK-groups called RKK-groups (\cite{Kasparov1}). Namely, for two $C_{0}(X)$-algebras $A$ and $B$, the group $RKK(X; A,B)$ consists of Kasparov $A$-$B$-modules $(E,\phi,F)$ as before, only with the additional requirement
\begin{equation}\label{DefRKKrelation}
(fa)\cdot e\cdot b = a\cdot e\cdot (fb)
\end{equation}
for any $f\in C_{0}(X)$, $a\in A$, $b\in B$ and $e\in E$.

The notions $RKK(X; \cdot, \cdot)$-product and $RKK(X; \cdot, \cdot)$-equivalence are similar to those of the KK-counterpart.

We let $\Delta^{p}\subseteq\mb{R}^{p+1}$ denote the standard $p$-simplex.

\begin{definition} A $C_{0}(X)$-algebra $B$ is called a \textit{KK-fibration} if for every positive integer $p$, every continuous map $f:\Delta^{p}\lra X$ and every element $v\in\Delta^{p}$ the evaluation $q_{v}: f^{*}(B)\lra B_{f(v)}$ is a KK-equivalence.
\end{definition}

\begin{definition} A $C_{0}(X)$-algebra $B$ is called an \textit{RKK-fibration} if for every positive integer $p$, every continuous map $f:\Delta^{p}\lra X$ and every element $v\in\Delta^{p}$, $f^{*}(B)$ is $RKK(\Delta^{p}; \cdot,\cdot)$-equivalent to $C(\Delta^{p},B_{f(v)})$.
\end{definition}

\begin{remark}\label{trivialRKKfibration} Given a C*-algebra $A$, the canonical $C_{0}(X)$-algebra $B = C_{0}(X)\otimes A$ is an RKK-fibration. Indeed, given $f: \Delta^{p}\lra X$ and $v\in\Delta^{p}$, the pullback 
\[ f^{*}(B) =  C(\Delta^{p})\otimes_{C_{0}(X)}C_{0}(X)\otimes A \]
is $C(\Delta^{p})$-linearly *-isomorphic to $C(\Delta^{p},B_{f(v)}) = C(\Delta^{p})\otimes B_{f(v)} = C(\Delta^{p})\otimes A$ by the map
\[ h\otimes g\otimes a \mapsto \Phi_{C(\Delta^{p})}(g)h\otimes a = f^{*}(g)h\otimes a, \]
where $h\in C(\Delta^{p})$, $g\in C_{0}(X)$ and $a\in A$.
This implies the required $RKK(\Delta^{p}; \cdot,\cdot)$-equivalence.
\end{remark}
Note also that the property of being an RKK-fibration is preserved under RKK-equivalence.

The following observation (\cite[Remark 1.4]{Echterhoff-Nest-Oyono-2}) will be useful.
\begin{lemma}\label{RKKfibKK} An $RKK$-fibration is a $KK$-fibration.
\end{lemma}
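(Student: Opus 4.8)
The plan is to deduce the $KK$-equivalence of the evaluation $q_{v}: f^{*}(B)\lra B_{f(v)}$ from the hypothesised $RKK(\Delta^{p};\cdot,\cdot)$-equivalence by combining two functorial maps out of $RKK$-theory with the contractibility of the simplex. Fix a positive integer $p$, a continuous map $f:\Delta^{p}\lra X$ and a point $v\in\Delta^{p}$, and write $T = C(\Delta^{p}, B_{f(v)})$ for the trivial bundle, whose fiber over $v$ is again $B_{f(v)}$. By hypothesis there is an $RKK(\Delta^{p};\cdot,\cdot)$-equivalence $\mu\in RKK(\Delta^{p}; f^{*}(B), T)$, and the goal is to write $[q_{v}]$ as a product of $KK$-equivalences.

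The first ingredient is the forgetful homomorphism $\iota: RKK(\Delta^{p}; C, D)\lra KK(C, D)$ obtained by discarding the linearity relation $(\ref{DefRKKrelation})$ from a cycle; it respects the Kasparov product and the unit, hence carries $RKK$-equivalences to $KK$-equivalences, so $\iota(\mu)\in KK(f^{*}(B), T)$ is a $KK$-equivalence. The second ingredient is the fiber-restriction homomorphism $r_{v}: RKK(\Delta^{p}; C, D)\lra KK(C_{v}, D_{v})$: given a cycle $(E,\phi,F)$, the relation $(\ref{DefRKKrelation})$ forces the left $C(\Delta^{p})$-action through $C$ and the right one through $D$ to agree on $E$, so $\phi$ descends to $\phi_{v}: C_{v}\lra \mc{L}_{D_{v}}(E_{v})$ on the fiber module $E_{v} = E\otimes_{D}D_{v}$, producing a cycle $(E_{v},\phi_{v},F_{v})\in KK(C_{v}, D_{v})$. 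Since passing to fibers commutes with the internal tensor product, $r_{v}$ again respects products and the unit, whence $r_{v}(\mu)\in KK((f^{*}(B))_{v}, T_{v}) = KK(B_{f(v)}, B_{f(v)})$ is a $KK$-equivalence.

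The crux is the naturality square linking these two maps to the evaluations: I claim that in $KK(f^{*}(B), B_{f(v)})$ one has
\[ [q_{v}]\cdot r_{v}(\mu) \;=\; \iota(\mu)\cdot [\mathrm{ev}_{v}], \]
where $q_{v}: f^{*}(B)\lra B_{f(v)}$ and $\mathrm{ev}_{v}: T\lra B_{f(v)}$ are the respective fiber evaluations. This is checked directly at the level of Kasparov cycles: forming the product with the evaluation $*$-homomorphism on the right factor realises the balanced tensor product as the fiber module $E_{v}$ and precomposes the representation with $q_{v}$, while forming the product with $r_{v}(\mu)$ on the left does the same, so that both sides are represented by $(E_{v},\,\phi_{v}\circ q_{v},\,F_{v})$. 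Identifying these internal tensor products is the main technical point, and I expect it to be the principal obstacle, though it is routine once the Hilbert modules are written out explicitly.

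To finish, note that the contraction of $\Delta^{p}$ onto $v$ induces a homotopy of $*$-homomorphisms exhibiting $\mathrm{ev}_{v}: T\lra B_{f(v)}$ as a homotopy equivalence, so $[\mathrm{ev}_{v}]$ is a $KK$-equivalence. Consequently the right-hand side $\iota(\mu)\cdot[\mathrm{ev}_{v}]$ is a product of $KK$-equivalences, and since $r_{v}(\mu)$ is invertible we obtain
\[ [q_{v}] \;=\; \iota(\mu)\cdot[\mathrm{ev}_{v}]\cdot r_{v}(\mu)^{-1}, \]
which is a $KK$-equivalence. As $p$, $f$ and $v$ were arbitrary, $B$ is a $KK$-fibration.
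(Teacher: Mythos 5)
Your argument is correct and is essentially the paper's own proof: both rest on the commuting square in $KK$ relating the two fiber evaluations $q_{v}$ and $\mathrm{ev}_{v}$ via the forgetful image of the $RKK$-equivalence and its restriction to the fiber over $v$, and then solve for $[q_{v}]$ as a product of invertibles. The only cosmetic differences are that you orient the equivalence from $f^{*}(B)$ to the trivial bundle rather than the reverse, and you spell out the fiber-restriction map and the naturality identity that the paper merely asserts.
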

\begin{proof} Suppose $B$ is an $RKK$-fibration, let $f: \Delta^{p}\lra X$ and $v\in \Delta^{p}$. Concisely put, we get the following comutative diagram in the KK category in which all arrows but the right vertical arrow are already known to be isomorphisms
\[
\begin{CD}
C(\Delta^{p}, B_{f(v)}) @>{\textbf{r}}>> f^{*}(B) \\
@V{ev_{v}}VV               @VV{q_{v}}V \\
B_{f(v)} @>>{r(v)}> B_{f(v)}
\end{CD}
\]
so it follows that the right vertical arrow $q_{v}$ must be an isomorphism as well.

In details, by assumption there exists an invertible element
\[ \textbf{r}\in RKK(\Delta^{p}; C(\Delta^{p},B_{f(v)}), f^{*}(B)). \]
Here $C(\Delta^{p},B_{f(v)}) = C(\Delta^{p})\otimes B_{f(v)}$ is the canonical $C(\Delta^{p})$-algebra with constant fiber $B_{f(v)}$ over each point of $\Delta^{p}$, its bundle projection map being just the evaluation $ev_{w}:C(\Delta^{p},B_{f(v)})\lra B_{f(v)}$, $ev_{w}(f\otimes b) = f(w)b$, for any $w\in\Delta^{p}$, and it gives in particular the KK-equivalence $[ev_{v}] \in KK(C(\Delta^{p},B_{f(v)}),B_{f(v)})$. Recall also that $f^{*}(B)$ has fiber $B_{f(v)}$ over the point $v\in\Delta^{p}$, denote this bundle projection map $q_{v}$. From the invertible element $\textbf{r}\in RKK(\Delta^{p}; C(\Delta^{p},B_{f(v)}), f^{*}(B))$ we get an invertible element $r(v)\in KK(B_{f(v)}, B_{f(v)})$ which implements the KK-equivalence between the fibers. It follows from
\[ [q_{v}]\cdot\textbf{r} = r(v)[ev_{v}] \]
that $[q_{v}] = r(v)[ev_{v}]\textbf{r}^{-1}$ is a KK-equivalence.
\end{proof}

Recall the Connes-Thom isomorphism in K-theory $K_{i}(A\rtimes_{\alpha}\mb{R}) \cong K_{i-1}(A)$, $i=0, 1$, where $\alpha\in Aut(A)$ is a continuous action. The analogous result in KK-theory establishes the existence of an invertible element $\textbf{t}_{\alpha}\in KK^{1}(A,A\rtimes_{\alpha}\mb{R}) = KK(SA,A\rtimes_{\alpha}\mb{R})$, \textit{the Thom element}. In other words, $A$ and $A\rtimes_{\alpha}\mb{R}$ are KK-equivalent with dimension shift $1$.
The case of an $\Rn$-action is handled by repeated application of the above, yielding a KK-equivalence with total dimension shift $n \,(mod\,2)$. In dealing with $C_{0}(X)$-algebras we shall make use of the following RKK-version of the Connes-Thom isomorphism (see \cite[$\S 4$]{Kasparov1})

\begin{theorem}\cite[Theorem 3.5]{Echterhoff-Nest-Oyono-1} \label{ConnesThomKasparov} Let  $A$ be a $C_{0}(X)$-algebra and $\alpha: \Rn\lra Aut(A)$ a fibrewise action. There exists an invertible element
\[ \textbf{t}_{\alpha} \in RKK^{n}(X; A, A\rtimes_{\alpha}\Rn). \]
Hence $A$ and $A\rtimes_{\alpha}\Rn$ are $RKK$-equivalent with dimension shift $n\,(mod\,2)$.
\end{theorem}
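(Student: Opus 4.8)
The plan is to reduce the statement to the one-dimensional case $n=1$ and then establish that the single Thom element is invertible by a homotopy-scaling argument within the representable framework. First I would observe that, exactly as in the lemma establishing the $C([0,1])$-algebra structure on $B^{\psi}\rtimes_{\beta^{\psi}}\Rn$, a fibrewise action $\alpha:\Rn\lra Aut(A)$ on a $C_{0}(X)$-algebra $A$ passes to the crossed product: since $\alpha_{g}$ commutes with $\Phi_{A}(f)$ for all $f\in C_{0}(X)$, the crossed product $A\rtimes_{\alpha}\Rn$ inherits a $C_{0}(X)$-algebra structure, and writing $\Rn = \mb{R}\times\mb{R}^{n-1}$ one has the iterated decomposition $A\rtimes_{\alpha}\Rn \cong (A\rtimes_{\alpha'}\mb{R}^{n-1})\rtimes \mb{R}$ in which the residual $\mb{R}$-action on the $C_{0}(X)$-algebra $A\rtimes_{\alpha'}\mb{R}^{n-1}$ is again fibrewise. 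The Thom element $\textbf{t}_{\alpha}$ should then be defined as the $RKK$-product of the inductively constructed element for $\mb{R}^{n-1}$ with the single-variable Thom element for the residual $\mb{R}$-action; since the $RKK(X;\cdot,\cdot)$-product is associative and sends pairs of invertibles to invertibles, and since dimension shifts add, this reduces everything to $n=1$ and accounts for the total shift $n\,(mod\,2)$.

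Second, for the one-dimensional case I would construct the Thom element by transporting Kasparov's representable Dirac and dual-Dirac elements for $\mb{R}$. For the \emph{trivial} action on a $C_{0}(X)$-algebra $A$ one has $A\rtimes_{\mathrm{triv}}\mb{R}\cong SA = C_{0}(\mb{R})\otimes A$, and the suspension/Bott pair gives an $RKK^{1}(X; A, SA)$-equivalence simply because $X$ is a spectator space on which $C_{0}(\mb{R})$ acts as scalars, so the $C_{0}(X)$-linearity condition (\ref{DefRKKrelation}) is automatic. This provides the base invertible element.

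Third --- and this is the crux --- I would connect an arbitrary fibrewise action $\alpha$ to the trivial action by the scaling homotopy $\alpha^{(u)}_{s} = \alpha_{us}$ for $u\in[0,1]$, which interpolates between $\alpha$ at $u=1$ and the trivial action at $u=0$. Crucially, each $\alpha^{(u)}$ is again fibrewise, because $\alpha_{us}$ commutes with $\Phi_{A}(f)$ whenever every $\alpha_{s}$ does; hence the field $u\mapsto A\rtimes_{\alpha^{(u)}}\mb{R}$ is a continuous field of $C_{0}(X)$-algebras, i.e. a $C_{0}(X\times[0,1])$-algebra carrying a fibrewise $\mb{R}$-action. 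Kasparov's representable (equivariant) Bott periodicity, applied over the parameter space, then produces a Thom element at every $u$ depending continuously on $u$, and invertibility at $u=0$ propagates to $u=1$ by homotopy invariance of $RKK(X;\cdot,\cdot)$. Since the $u=0$ element is the trivial-action Bott equivalence above, the $u=1$ element $\textbf{t}_{\alpha}$ is invertible.

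The main obstacle I anticipate is purely in the bookkeeping of $C_{0}(X)$-linearity: one must check at each stage --- in the iterated crossed-product decomposition, in the construction of the Dirac/dual-Dirac cycles, and along the scaling homotopy --- that the relevant Kasparov $A$-$(A\rtimes\mb{R})$-modules genuinely satisfy the representability condition (\ref{DefRKKrelation}), and that the residual actions remain fibrewise so that the intermediate crossed products stay honest $C_{0}(X)$-algebras. All of these are guaranteed by the fibrewise hypothesis, but verifying them is where the content lies; the underlying analytic machinery is exactly the equivariant Connes--Thom isomorphism developed in \cite[$\S 4$]{Kasparov1}.
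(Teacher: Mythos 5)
The paper does not prove this statement at all: it is imported verbatim as \cite[Theorem 3.5]{Echterhoff-Nest-Oyono-1}, so your proposal has to stand on its own. Your first two steps are sound: a fibrewise action does pass to the crossed product, the iterated decomposition $A\rtimes_{\alpha}\Rn\cong(A\rtimes_{\alpha'}\mb{R}^{n-1})\rtimes\mb{R}$ with fibrewise residual action reduces everything to $n=1$ with additive dimension shifts, and for the trivial action $A\rtimes_{\mathrm{triv}}\mb{R}\cong SA$ the Bott/suspension element is $C_{0}(X)$-linear because $C_{0}(\mb{R})$ acts as scalars, so condition $(\ref{DefRKKrelation})$ is automatic.

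The gap is in your third step. Homotopy invariance of $RKK(X;\cdot,\cdot)$ says that homotopic morphisms between two \emph{fixed} algebras induce the same class; it does not let you transport invertibility of a class across the fibres of a continuous field of pairwise non-isomorphic algebras. Your field $u\mapsto A\rtimes_{\alpha^{(u)}}\mb{R}$ is exactly such a field: for $u>0$ every fibre is isomorphic to $A\rtimes_{\alpha}\mb{R}$ (rescale the group variable), but at $u=0$ the fibre jumps to $SA$. Concretely, for $A=C_{0}(\mb{R})$ with the translation action the fibres are $\mc{K}(L^{2}(\mb{R}))$ for $u>0$ and $C_{0}(\mb{R}^{2})$ at $u=0$; the assertion that an invertible element at $u=0$ ``propagates'' to $u=1$ \emph{is} the Connes--Thom isomorphism you are trying to prove, not a consequence of homotopy invariance. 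Worse, up to the reparametrization $u=\sqrt{s}$ your total algebra is precisely the algebra $B\rtimes_{\beta}\Rn$ built from the action $(\ref{actionBeta})$, and ``invertibility at one fibre implies invertibility at another'' is a statement of the same kind as Theorem $\ref{maintheorem}$, which this paper \emph{deduces from} Theorem $\ref{ConnesThomKasparov}$ --- so the argument is circular. The genuine proofs avoid this either by constructing both the Thom element and an explicit candidate inverse (Dirac and dual-Dirac cycles for $\mb{R}$) and computing both Kasparov products via Takai duality and the rotation trick on $A\otimes C_{0}(\mb{R}^{2})$ (Fack--Skandalis), or by characterizing the Thom map axiomatically through naturality and a normalization at the trivial action and establishing existence by a cocycle-perturbation argument (Connes); the fibrewise statement then follows because these constructions are natural and $C_{0}(X)$-linear, which is how \cite{Echterhoff-Nest-Oyono-1} proceeds using the machinery of \cite[$\S 4$]{Kasparov1}.
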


\begin{theorem}\label{theoremRKKfibration} $B^{\psi}$ is an RKK-fibration.
\end{theorem}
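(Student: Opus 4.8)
The plan is to reduce everything to the trivial bundle. By Remark \ref{trivialRKKfibration} the $C([0,1])$-algebra $B = C([0,1])\otimes A$ is already an RKK-fibration, and it was noted above that being an RKK-fibration is preserved under $RKK([0,1];\cdot,\cdot)$-equivalence. So it suffices to produce an invertible element of $RKK([0,1]; B^{\psi}, B)$, i.e.\ to show that the deformed bundle $B^{\psi}$ is $RKK([0,1];\cdot,\cdot)$-equivalent to $B$ over the base $[0,1]$.

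To build this equivalence I would assemble a chain, using the two facts furnished by the preceding Lemma: the actions $\beta$ on $B$ and $\beta^{\psi}$ on $B^{\psi}$ are both fibrewise $\Rn$-actions on $C([0,1])$-algebras, and there is a $C([0,1])$-linear *-isomorphism $S: B^{\psi}\rtimes_{\beta^{\psi}}\Rn \lra B\rtimes_{\beta}\Rn$. The isomorphism gives an invertible $[S]\in RKK([0,1]; B^{\psi}\rtimes_{\beta^{\psi}}\Rn, B\rtimes_{\beta}\Rn)$ of degree $0$. Because $\beta$ and $\beta^{\psi}$ are fibrewise, the fibrewise Connes-Thom isomorphism of Theorem \ref{ConnesThomKasparov} supplies invertible Thom elements $\textbf{t}_{\beta}\in RKK^{n}([0,1]; B, B\rtimes_{\beta}\Rn)$ and $\textbf{t}_{\beta^{\psi}}\in RKK^{n}([0,1]; B^{\psi}, B^{\psi}\rtimes_{\beta^{\psi}}\Rn)$. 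Composing over the base,
\[ \textbf{t}_{\beta^{\psi}}\cdot [S]\cdot \textbf{t}_{\beta}^{-1} \in RKK([0,1]; B^{\psi}, B) \]
is invertible with total dimension shift $n + 0 + n = 2n \equiv 0 \pmod 2$, so $B^{\psi}$ and $B$ are genuinely $RKK([0,1];\cdot,\cdot)$-equivalent in degree $0$. Invoking Remark \ref{trivialRKKfibration} together with the stability of the RKK-fibration property then finishes the proof.

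I do not expect the final assembly to be the difficulty, since it is formal once the ingredients are in place; the real content is ensuring that each link of the chain is an equivalence over the base $[0,1]$. Concretely, the two applications of Connes-Thom are legitimate only because $\beta$ and $\beta^{\psi}$ are fibrewise --- which is exactly the point of the preceding Lemma --- and the crossed-product identification must be used in its $C([0,1])$-linear form rather than as a bare *-isomorphism. One must also track the dimension shifts carefully: each Thom element carries shift $n$, and it is the cancellation of these two shifts that places the final equivalence, and hence the conclusion, in the degree $0$ demanded by the definition of RKK-fibration.
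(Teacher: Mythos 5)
Your proposal is correct and follows essentially the same route as the paper's own proof: the chain $B^{\psi}\sim B^{\psi}\rtimes_{\beta^{\psi}}\Rn\cong B\rtimes_{\beta}\Rn\sim B$ via two applications of the fibrewise Connes--Thom theorem and the $C([0,1])$-linear isomorphism of crossed products, with the two dimension shifts of $n$ cancelling, followed by Remark \ref{trivialRKKfibration} and stability of the RKK-fibration property under RKK-equivalence. Your version is merely a little more explicit about the individual invertible elements and their composition.
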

\begin{proof} It follows from Theorem $\ref{ConnesThomKasparov}$ that $B^{\psi}$ is $RKK([0,1]; \cdot,\cdot)$-equivalent, with dimension shift $n\,(mod\,2)$, to $B^{\psi}\rtimes_{\beta^{\psi}}\Rn$. By the isomorphism $(\ref{thirdProp})$ the latter algebra is $RKK([0,1]; \cdot,\cdot)$-equivalent to $B\rtimes_{\beta}\Rn$, which by Theorem $\ref{ConnesThomKasparov}$ again is $RKK([0,1]; \cdot,\cdot)$-equivalent, with another dimension shift $n\,(mod\,2)$, to $B = C([0,1])\otimes A$. The total dimension shift thus far is $2n\,(mod\,2) = 0$, i.e. the net effect being no dimension shift, so $B^{\psi}$ is plainly $RKK([0,1]; \cdot,\cdot)$-equivalent to $B$. Finally, the algebra $B = C([0,1])\otimes A$ is clearly an $RKK$-fibration (Remark $\ref{trivialRKKfibration}$), thus proving the claim.
\end{proof}

It follows from Theorem $\ref{theoremRKKfibration}$ and Lemma $\ref{RKKfibKK}$ that $B^{\psi}$ is a KK-fibration. Taking the identity function of the $1$-simplex, $f: \Delta^{1} = [0,1]\lra [0,1]$, $f(s) = s$, we conclude that the evaluation map $q_{s}: B^{\psi}\lra (B^{\psi})_{s}$ is a KK-equivalence. Although maybe not completely transparent thus far, it will be made clear in section $\ref{RieffelField}$ that $B^{\psi} \cong \Gamma( (A_{tJ})_{t\in [0,1]} )$ is the bundle algebra of the continuous field over $[0,1]$ of the Rieffel deformation and $(B^{\psi})_{s} \cong A_{sJ}$ is the fiber over the point $s\in [0,1]$.

\section{The continuous field of the Rieffel deformation}\label{RieffelField}
We briefly recall some of the basic facts from \cite{Rieffel1} concerning Rieffel deformation. Let  $\sigma:\Rn\lra Aut(A)$ be a strongly continuous action on a separable C*-algebra $A$, and $J\in M_{n}(\mb{R})$ a skew-symmetric matrix.
Let $\tau$ be the translation action on the Frechet space $C_{b}(\Rn,A)$ and let $C_{u}(\Rn,A)$ be the largest subspace on which $\tau$ is strongly continuous. Denote by $\mc{B}^{A} = \mc{B}^{A}(\Rn)\subseteq C_{u}(\Rn,A)$ the subalgebra of smooth elements for the action $\tau$. For any $F\in \mc{B}^{A}(\Rn\times \Rn)$ the integral
\[ \iint F(u,v) e^{2\pi i u\cdot v}\, du\,dv \]
exists, as shown in \cite[Chapter 1]{Rieffel1} by considerations of oscillatory integrals. For $f, g\in \mc{B}^{A}(\Rn)$, the function $(u,v)\mapsto \tau_{Ju}(f)(x)\tau_{v}(g)(x)$ is an element of $\mc{B}^{A}(\Rn\times \Rn)$ for each $x\in\Rn$, hence the following integral is well defined
\begin{equation}\label{deformedProdFunctions}
(f\times_{J}g)(x) = \iint \tau_{Ju}(f)(x)\tau_{v}(g)(x)e^{2\pi i (u\cdot v)},
\end{equation}
and it turns out $\times_{J}$ defines an associative product on $\mc{B}^{A}(\Rn)$, and we denote by $\mc{B}^{A}_{J} = (\mc{B}^{A}(\Rn), \times_{J})$ this algebra structure.
Let $\mc{S}^{A}\subseteq \mc{B}^{A}$ be the subspace of $A$-valued Schwartz functions. This is naturally a right Hilbert $A$-module for the $A$-valued inner product $\langle f,g\rangle_{A} = \int f(x)^{*}g(x)$. Considering the product $\times_{J}$, it turns out $\mc{S}^{A}_{J}$ is an ideal in $\mc{B}^{A}_{J}$, this still being compatible with the Hilbert C*-module structure. In this way $\mc{S}^{A}_{J}$ carries a representation $L = L^{J}$ of $\mc{B}^{A}_{J}$ by adjointable operators 
\[ L: \mc{B}^{A}_{J} \lra \mc{L}(\mc{S}^{A}_{J}) \]
\[ L_{f}(\xi) = f\times_{J}\xi, \quad f\in \mc{B}^{A}_{J}, \, \xi\in \mc{S}^{A}_{J}. \]

Let $A^{\infty}\subseteq A$ denote the dense *-subalgebra of smooth elements for the action $\sigma$. For $a, b\in A^{\infty}$, the function $(u,v)\mapsto \sigma_{Ju}\sigma_{v}(b)$ is an element of $\mc{B}^{A}(\Rn\times\Rn)$ and we may define
\[ a\times_{J}b = \iint \sigma_{Ju}(a)\sigma_{v}(b) e^{2\pi i \, u\cdot v} \, du\, dv. \]

The homomorphism $A \lra C_{u}(\Rn,A)$, $a\mapsto \widetilde{a}$, $\widetilde{a}(x) = \sigma_{x}(a)$, is equivariant for the respective actions $\sigma$ and $\tau$, thus maps $A^{\infty}\lra \mc{B}^{A}$. Moreover, $\widetilde{a\times_{J}b} = \widetilde{a}\times_{J}\widetilde{b}$, i.e. this is a homomorphism for the products $\times_{J}$. Thus $A^{\infty}$ is represented on $\mc{S}^{A}_{J}$, and we define a new norm $||\cdot||_{J}$ on $A^{\infty}$, $||a||_{J} = ||L_{\widetilde{a}}||$.

\begin{definition}[Rieffel deformation] Equip $A^{\infty}$ with the product $\times_{J}$ and the norm $||\cdot||_{J}$. This completion is denoted $A_{J}$ and is called the deformation of $A$ along $\sigma$ by $J$, or in short the \textit{Rieffel deformation} of $A$.
\end{definition}

Below we list some of the properties of the Rieffel deformation.
\renewcommand{\labelenumi}{(\roman{enumi})}
\begin{lemma}[Properties of the Rieffel deformation]\label{RieffelProperties} Let $A$ be a separable C*-algebra, $\sigma: \Rn \lra Aut(A)$ a strongly continuous action and $J\in M_{n}(\mb{R})$ such that $J^{t} = -J$.
\begin{enumerate}
  \item\label{RieffelProperties1} $\times_{J}$ is associative and the involution $*$ for $A$ is also an involution for $A_{J}$, which thus becomes a C*-algebra
  \item\label{RieffelProperties3} $a\times_{J}b = ab$ for $J=0$
  \item\label{RieffelProperties4} For every fixed point $a\in A^{\sigma}$, $a\times_{J}b = ab$ and $b\times_{J}a = ba$ for every $b\in A$
  \item\label{RieffelProperties5} $(A_{J})_{K} = A_{J+K}$ for any skew-symmetric $K\in Mat_{n}(\mb{R})$
  \item\label{RieffelProperties6} The action $\sigma$ is also an action on $A_{J}$, $\sigma: \Rn\lra Aut(A_{J})$. Moreover $(A_{J})^{\infty} = (A^{\infty})_{J}$
  \item\label{RieffelProperties7} The dense subalgebra $(A^{\infty})_{J}\subseteq A_{J}$ is stable under holomorphic functional calculus
  \item\label{RieffelProperties8} Given a $\sigma$-invariant ideal $I\subseteq A$, the equivariant short exact sequence 
\[
\begin{CD}
0 @>>> I @>>> A @>>> A/I @>>> 0
\end{CD}
\]
implies a short exact sequence
\[
\begin{CD}
0 @>>> I_{J} @>>> A_{J} @>>> (A/I)_{J} @>>> 0
\end{CD}
\]
\item\label{RieffelProperties9} For any $T\in M_{n}(\mb{R})$, define a new action $\sigma^{T}$ by $\sigma^{T}_{x}(a) = \sigma_{Tx}(a)$, for $x\in\Rn$, $a\in A$. Performing the deformation procedure for the action $\sigma^{T}$ and skew-symmetric matrix $J$, denote by $\times_{J}^{T}$ the deformed product so obtained. Then
\[ \times^{T}_{J} = \times_{TJT^{t}}. \]
\end{enumerate}
\end{lemma}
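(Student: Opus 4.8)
The plan is to split these eight items into two layers: the deep analytic facts, which rest on Rieffel's theory of $A$-valued oscillatory integrals and are established in \cite{Rieffel1}, and the elementary functorial identities, which I would verify directly from the integral defining $\times_{J}$ on $A^{\infty}$, namely $a\times_{J}b = \iint \sigma_{Ju}(a)\sigma_{v}(b)\,e^{2\pi i u\cdot v}\,du\,dv$ (the function version being $(\ref{deformedProdFunctions})$). For the foundational statements in \ref{RieffelProperties1}---associativity of $\times_{J}$, that $*$ remains an involution, and that $\|a\|_{J}=\|L_{\widetilde{a}}\|$ is a genuine C*-norm---together with the spectral invariance of \ref{RieffelProperties6}, I would appeal to \cite{Rieffel1}: associativity comes from a Fubini-type reordering of the oscillatory integrals and the bicharacter identity for $e^{2\pi i u\cdot v}$, the involution property from the skew-symmetry $J^{t}=-J$ and $\sigma$ acting by $*$-automorphisms, and the C*-identity from the fact that $L$ is a $*$-representation of $\mc{B}^{A}_{J}$ on the Hilbert module $\mc{S}^{A}_{J}$, so that $\|\cdot\|_{J}$ is by construction an operator norm.

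The elementary identities I would compute directly. For \ref{RieffelProperties3}, setting $J=0$ forces $\sigma_{Ju}=\mathrm{id}$, so $a\times_{0}b = \iint a\,\sigma_{v}(b)\,e^{2\pi i u\cdot v}\,du\,dv$; performing the $u$-integration produces a Dirac mass at $v=0$ by Fourier inversion, collapsing the integral to $ab$. The same device gives \ref{RieffelProperties4}: if $a$ is a fixed point then $\sigma_{Ju}(a)=a$, so $a\times_{J}b$ reduces as above to $ab$, while in $b\times_{J}a$ the factor $\sigma_{v}(a)=a$ lets the $v$-integration produce a delta at $u=0$, giving $ba$. For \ref{RieffelProperties9} I would substitute $u=T^{t}q$ and $v=T^{-1}p$ in $a\times^{T}_{J}b = \iint \sigma_{TJu}(a)\sigma_{Tv}(b)\,e^{2\pi i u\cdot v}\,du\,dv$; the Jacobian factors $|\det T^{t}|$ and $|\det T^{-1}|$ cancel, $u\cdot v = q\cdot p$, and the integrand becomes $\sigma_{TJT^{t}q}(a)\sigma_{p}(b)$, so the result is exactly the integral defining $a\times_{TJT^{t}}b$ (and $TJT^{t}$ is skew-symmetric). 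The substitution needs $T$ invertible; for singular $T$ I would conclude by density of $GL_{n}(\mb{R})$ in $M_{n}(\mb{R})$ and continuity of both deformed products in the matrix parameter.

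The three composite properties I would assemble from these. For \ref{RieffelProperties5}, since $\Rn$ is abelian each $\sigma_{x}$ commutes with $\sigma_{Ju}$ and $\sigma_{v}$ inside the integrand, whence $\sigma_{x}(a\times_{J}b)=\sigma_{x}(a)\times_{J}\sigma_{x}(b)$; thus $\sigma_{x}$ is a $\times_{J}$-automorphism, it is isometric for $\|\cdot\|_{J}$ and extends to $A_{J}$, and the smooth vectors coincide as sets because the underlying action is unchanged, giving $(A_{J})^{\infty}=(A^{\infty})_{J}$. Property \ref{RieffelProperties4} then becomes meaningful: deforming $A_{J}$ by $K$ along $\sigma$ unfolds into a fourfold oscillatory integral whose two bicharacters combine, and after reordering and a linear substitution it reduces to the single $(J+K)$-deformed product. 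For \ref{RieffelProperties8}, the equivariant sequence restricts to dense smooth subalgebras $I^{\infty}\subseteq A^{\infty}$ with quotient $(A/I)^{\infty}$; the deformed product preserves the ideal and descends to the quotient at the smooth level, and exactness survives completion in the deformed norms, yielding $0\to I_{J}\to A_{J}\to (A/I)_{J}\to 0$.

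The main obstacle lies entirely in the foundational layer: associativity and the C*-identity of \ref{RieffelProperties1} and the holomorphic-functional-calculus stability of \ref{RieffelProperties6} genuinely require the analytic framework of $A$-valued oscillatory integrals and the spectral behaviour of smooth subalgebras, not merely formal manipulation, and these I would cite from \cite{Rieffel1}. Among the identities I would prove myself, the delicate one is \ref{RieffelProperties4}: the iterated deformation produces a fourfold oscillatory integral, and one must justify the reordering and the substitution within Rieffel's convergence theory before the two matrices may legitimately be added.
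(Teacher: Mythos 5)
The paper itself offers no proof of Lemma \ref{RieffelProperties}: it is stated as a digest of results from \cite{Rieffel1} (associativity and the C*-structure, the iterated deformation $(A_J)_K = A_{J+K}$, spectral invariance, exactness, and the $\sigma^T$ identity are Rieffel's Chapters 2--7), so your proposal is being compared against a citation rather than an argument. Your overall strategy --- defer to \cite{Rieffel1} for the oscillatory-integral machinery and verify the formal identities directly --- is the right reconstruction, and your computations for item \ref{RieffelProperties3} (the $u$-integration producing a point mass at $v=0$ is exactly Rieffel's Fourier-inversion lemma for oscillatory integrals), for item \ref{RieffelProperties4}, and for item \ref{RieffelProperties9} via the substitution $u=T^t q$, $v=T^{-1}p$ with cancelling Jacobians (plus density of $GL_n(\mb{R})$ and continuity of the oscillatory integral in the matrix parameter for singular $T$) are all sound.

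Two of the steps you classify as ``elementary assembly'' are in fact nontrivial theorems, and the arguments you give for them would not survive scrutiny. First, the claim that $(A_J)^\infty = (A^\infty)_J$ ``because the underlying action is unchanged'' is circular: $A_J$ is the completion of $A^\infty$ in the \emph{different} norm $\|\cdot\|_J$, so an element of $A_J$ that is smooth for the extended action need not a priori lie in $A^\infty$; Rieffel's proof of this identification (his Theorem 7.1) requires the inverse deformation $(A_J)_{-J}=A$ and careful norm comparisons, and you should cite it alongside the spectral-invariance statement. Second, ``exactness survives completion in the deformed norms'' is precisely the hard content of item \ref{RieffelProperties8}: one must show that the deformed norm on $I^\infty$ computed inside $A_J$ agrees with the intrinsic norm of $I_J$ and that the closure of $I^\infty$ is exactly the kernel of $A_J\to (A/I)_J$; Rieffel devotes a chapter to this and it does not follow formally. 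Finally, your cross-references are scrambled in the second half: the enumeration labels skip the number 2, so what you call \ref{RieffelProperties5} (equivariance of $\times_J$ and the smooth subalgebra) is item \ref{RieffelProperties6}, your fourfold-integral discussion of $(A_J)_K=A_{J+K}$ is item \ref{RieffelProperties5} rather than \ref{RieffelProperties4}, and the holomorphic-functional-calculus statement is item \ref{RieffelProperties7}, not \ref{RieffelProperties6}.
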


The equivalence between the Rieffel deformation $A_{J}$ and the 2-cocycle deformation $A^{\psi_{J}}$ is given by the *-isomorphism of the following lemma. Recall that one considers the $\Rn$-product $(A\rtimes_{\sigma}\Rn, \lambda, \widehat{\sigma}^{\psi_{J}})$, the 2-cocycle $\psi_{J}$ in $(\ref{2cocycleEq})$ and $A^{\psi_{J}}\subseteq M(A\rtimes_{\sigma}\Rn)$ is the subalgebra satisfying the Landstad conditions.

\begin{lemma}\label{RieffelKasprzakApproach} There is a *-isomorphism
\[ T: A^{\psi_{J}} \lra A_{J}. \]
\end{lemma}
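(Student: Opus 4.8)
The plan is to prove the isomorphism by fixing a dense common core for the two algebras and transporting all the algebraic data across it. Both deformations are built over the smooth algebra $A^{\infty}$: on the Rieffel side $A^{\infty}$ equipped with $\times_{J}$ is dense in $A_{J}$ by definition, while on the Kasprzak side one first checks that the smooth vectors of $A^{\psi_{J}}$ for the action $\mathrm{Ad}\,\lambda$ form a dense $*$-subalgebra that is linearly isomorphic to $A^{\infty}$. I would therefore produce a linear quantization bijection $Q:A^{\infty}\lra A^{\psi_{J}}$ onto this dense subalgebra and define $T$ as its inverse, extended by continuity; the entire problem then reduces to showing that $Q$ carries $\times_{J}$ to the operator product inherited from $M(A\rtimes_{\sigma}\Rn)$, intertwines the involutions, and is isometric for $\|\cdot\|_{J}$.

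The computational engine is an explicit description of the twisting unitaries. Under the identification $\widehat{\Rn}\cong\Rn$ and the cocycle $(\ref{2cocycleEq})$, each function $\psi_{e^{1}_{u}}$ is itself a character, so that $U_{e^{1}_{u}}=\lambda(\psi_{e^{1}_{u}})=\lambda_{-Ju}$ is just a canonical unitary of the crossed product. Consequently the deformed dual action becomes the inner-perturbed action $\widehat{\sigma}^{\psi_{J}}_{e^{1}_{u}}(x)=\lambda_{Ju}\,\widehat{\sigma}_{e^{1}_{u}}(x)\,\lambda_{-Ju}$, and the Landstad conditions for $\widehat{\sigma}^{\psi_{J}}$ translate into the concrete covariance relation $\widehat{\sigma}_{e^{1}_{u}}(x)=\lambda_{-Ju}\,x\,\lambda_{Ju}$. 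Feeding this relation into the oscillatory-integral machinery of \cite{Rieffel1} is precisely what reproduces the factor $\sigma_{Ju}$ appearing in $(\ref{deformedProdFunctions})$: I expect the product $Q(a)Q(b)$, computed in $M(A\rtimes_{\sigma}\Rn)$, to collapse to $Q\big(\iint \sigma_{Ju}(a)\sigma_{v}(b)\,e^{2\pi i\,u\cdot v}\,du\,dv\big)=Q(a\times_{J}b)$. Preservation of the involution then follows from the skew-symmetry of $J$ together with the symmetry of the bicharacter $e$.

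The remaining and most delicate point is the identification of norms: one must show $\|Q(a)\|_{M(A\rtimes_{\sigma}\Rn)}=\|a\|_{J}=\|L_{\widetilde{a}}\|$. My plan is to realize both norms through a single faithful representation. Rieffel's representation $L$ on the Hilbert $A$-module $\mc{S}^{A}_{J}$ and the regular representation of $A\rtimes_{\sigma}\Rn$ restricted to $A^{\psi_{J}}$ should be shown to be spatially the same, so that $Q$ intertwines $L_{\widetilde{a}}$ with the image of $Q(a)$; the identity $U_{e^{1}_{u}}=\lambda_{-Ju}$ again forces the two operator families to coincide on the dense domain $\mc{S}^{A}_{J}$. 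Once $Q$ is known to be an isometric $*$-homomorphism with dense range, it extends to the desired $*$-isomorphism $T=Q^{-1}:A^{\psi_{J}}\lra A_{J}$.

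I expect the main obstacle to be the product and norm matching of the previous two paragraphs rather than any formal step: the oscillatory integrals demand uniform control of smoothness and Schwartz-type decay in order to justify exchanging the crossed-product multiplication with the $\times_{J}$-integral, and the spatial comparison of the two representations must be carried out with careful bookkeeping of the sign conventions in $\psi_{J}$. A cleaner but less self-contained alternative would bypass the explicit map entirely: if one establishes an $\Rn$-product isomorphism between $(A_{J}\rtimes_{\sigma}\Rn,\lambda,\widehat{\sigma})$ and $(A\rtimes_{\sigma}\Rn,\lambda,\widehat{\sigma}^{\psi_{J}})$, then Landstad duality (Theorem $\ref{theo1}$) identifies their Landstad algebras $A_{J}$ and $A^{\psi_{J}}$ at once; the cost is importing Rieffel's computation of the deformed crossed product, which is not among the results recalled above.
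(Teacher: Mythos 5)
Your plan is sound, and it is essentially the argument of the reference that the paper itself defers to: the paper's own ``proof'' of this lemma consists of a citation to \cite{H-M} together with the formula $T(y)=\int_{\Rn}y(v)\,dv$ for $y\in C_{c}(\Rn,A^{\infty})\cap A^{\psi_{J}}$, so everything of substance that you describe --- identifying the twisting unitaries, rewriting Landstad invariance as a covariance relation, matching $\times_{J}$ against the crossed-product multiplication by oscillatory integrals, and identifying $\|\cdot\|_{J}$ with the multiplier norm --- is exactly the content that is outsourced. Your map goes in the opposite direction, a quantization $Q:A^{\infty}\lra A^{\psi_{J}}$ rather than the integration map $T$, but these are mutually inverse descriptions of the same isomorphism, and your computation of the unitaries is right: since $J$ is skew-symmetric, $\psi_{e^{1}_{u}}$ is the character $e^{1}_{v}\mapsto e^{1}_{v}(-Ju)$, so $U_{e^{1}_{u}}=\lambda_{-Ju}$ (up to the sign convention hidden in the Fourier identification $C^{*}(\Rn)\cong C_{0}(\widehat{\Rn})$, which you rightly flag), and the Landstad condition for $\widehat{\sigma}^{\psi_{J}}$ becomes $\widehat{\sigma}_{e^{1}_{u}}(x)=\lambda_{-Ju}x\lambda_{Ju}$. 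The two steps you single out as delicate --- the product intertwining and the norm identification via a common faithful representation on $\mc{S}^{A}_{J}$ --- are indeed where all the analytic work lives, and they are not carried out in your sketch; as written the proposal is a correct roadmap rather than a complete proof, which in fairness puts it on the same footing as the paper. Your alternative route, exhibiting $(A_{J}\rtimes_{\sigma}\Rn,\lambda,\widehat{\sigma})$ as isomorphic to the $\Rn$-product $(A\rtimes_{\sigma}\Rn,\lambda,\widehat{\sigma}^{\psi_{J}})$ and invoking the uniqueness in Theorem \ref{theo1}, is also valid and is closer in spirit to how \cite{kasprzak} originally relates the two constructions, at the cost of importing Rieffel's computation of the deformed crossed product.
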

\begin{proof} We refer the reader to \cite{H-M} for details, and give only the form of the *-isomorphism here.  Let $y\in C_{c}(\Rn, A^{\infty}) \subseteq A\rtimes_{\sigma}\Rn\subseteq M(A\rtimes_{\sigma}\Rn)$, and suppose $y\in A^{\psi_{J}}$, which means that $\widehat{\sigma}^{\psi_{J}}_{x}(y) = y$ for all $x\in\Rn$. The isomorphism $T$ is described on such elements by
\[ T(y) = \int_{\Rn} y(v) \, dv. \]
\end{proof}

We consider $B = C([0,1])\otimes A$ with the action $\beta$ as in $(\ref{actionBeta})$. Note that $\beta_{x}(y)(s) = \sigma^{\sqrt{s}I}_{x}(y(s))$ (see Lemma $\ref{RieffelProperties} (ix)$).
For every $x\in\Rn$, let $\overline{\beta}_{x}\in Aut(M(B))$ denote the canonical extension of $\beta_{x}$ to the multiplier algebra, namely for $L\in M(B)$, $\overline{\beta}_{x}(L)(b) = \beta_{x}(L(\beta_{-x}(b)))$, for $b\in B$. A quick calculation reveals that for every $f\in C([0,1])$, $\overline{\beta}_{x}(\Phi_{B}(f)) = \Phi_{B}(f)$, i.e. $\Phi_{B}(C([0,1])) \subseteq M(B)^{\overline{\beta}}$. It is also clear that  $\Phi_{B}(C([0,1])) \subseteq M(B)^{\infty}$. From the inclusion $B\subseteq M(B)$ as a $\beta$-invariant ideal we get $B_{J}\subseteq M(B)_{J}$ by Lemma $\ref{RieffelProperties} \,\, (vii)$, and working inside $M(B)_{J}$ get from Lemma $\ref{RieffelProperties}\,\,(iii)$
\begin{equation}\label{deformFieldStructEq}
\Phi_{B}(f)\times_{J}y = \Phi_{B}(f)y = y\Phi_{B}(f) = y\times_{J}\Phi_{B}(f)
\end{equation}
for $y\in B^{\infty}$ and $f\in C([0,1])$, as $\Phi_{B}(f)\in M(B)^{\overline{\beta}}$. This yields a $C([0,1])$-algebra structure on $B_{J}$, denoted $\Phi_{B_{J}}: C([0,1])\lra ZM(B_{J})$ given by $\Phi_{B_{J}}(f)y = \Phi_{B}(f)\times_{J}y = \Phi_{B}(f)y$. As such, $B_{J}$ is an \textit{essential} $C([0,1])$-module, i.e. $\overline{C([0,1])B_{J}} = B_{J}$.

\begin{theorem} $(A_{tJ})_{t\in [0,1]}$ is a continuous field of C*-algebras, where we take as the algebra of sections $\Gamma( (A_{tJ})_{t\in [0,1]} )$ to be the algebra $B_{J}$.
\end{theorem}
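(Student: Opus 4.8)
The plan is to verify the axioms of a (Dixmier) continuous field of C*-algebras for the family $(A_{sJ})_{s\in[0,1]}$ with $B_{J}$ in the role of the section algebra. Concretely I would check: (a) the fibres of the $C([0,1])$-algebra $B_{J}$ are the $A_{sJ}$; (b) the evaluations $q_{s}\colon B_{J}\lra A_{sJ}$ are surjective, so the sections are fibrewise dense; (c) for each $y\in B_{J}$ the function $s\mapsto \|q_{s}(y)\|$ is continuous; and (d) $B_{J}$ is complete for the local uniform structure. Properties (a), (b), (d) are structural and follow from the preceding discussion, while (c) carries the genuine continuous-field content.

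For the fibre identification the key is that the $\sqrt{s}$-scaling in $(\ref{actionBeta})$ turns ordinary evaluation into an equivariant map. Indeed $q_{s}\colon B\lra A$, $q_{s}(y)=y(s)$, satisfies $q_{s}(\beta_{x}(y))=\sigma_{\sqrt{s}x}(q_{s}(y))=\sigma^{\sqrt{s}I}_{x}(q_{s}(y))$, so $q_{s}$ intertwines $\beta$ with the rescaled action $\sigma^{\sqrt{s}I}$ of Lemma \ref{RieffelProperties}\,(ix), and its kernel $I_{s}B=\{y\colon y(s)=0\}$ is a $\beta$-invariant ideal. Applying Lemma \ref{RieffelProperties}\,(viii) to the equivariant sequence $0\to I_{s}B\to B\to A\to 0$ gives $0\to (I_{s}B)_{J}\to B_{J}\to A'_{J}\to 0$, where $A'_{J}$ denotes $A$ deformed along the induced quotient action $\sigma^{\sqrt{s}I}$ by $J$; by Lemma \ref{RieffelProperties}\,(ix) with $T=\sqrt{s}I$ one has $\times^{\sqrt{s}I}_{J}=\times_{(\sqrt{s}I)J(\sqrt{s}I)^{t}}=\times_{sJ}$, so $A'_{J}=A_{sJ}$. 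It then remains to match $(I_{s}B)_{J}$ with the fibre ideal $I_{s}B_{J}=\overline{\Phi_{B_{J}}(I_{s})B_{J}}$ of the $C([0,1])$-algebra $B_{J}$; this is exactly where $(\ref{deformFieldStructEq})$ is used, for it says the $C([0,1])$-action is undeformed, so the module structures over $C([0,1])$ before and after deformation coincide and $(I_{s}B)_{J}=I_{s}B_{J}$. Hence $(B_{J})_{s}=B_{J}/I_{s}B_{J}\cong A_{sJ}$, which gives (a), and surjectivity of the quotient map in the sequence gives (b). The embedding $B_{J}\hookrightarrow\prod_{s}A_{sJ}$, $y\mapsto(q_{s}(y))_{s}$, is isometric because $\|y\|=\sup_{s}\|q_{s}(y)\|$ for any $C([0,1])$-algebra; and (d) is automatic from the general correspondence between $C([0,1])$-algebras and upper semicontinuous bundles, using that $B_{J}$ is an essential $C([0,1])$-module.

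The one substantial point is (c). Upper semicontinuity of $s\mapsto\|q_{s}(y)\|$ is automatic for any $C([0,1])$-algebra, so $B_{J}$ a priori defines an upper semicontinuous field; the content is lower semicontinuity, i.e.\ genuine continuity. For $y$ in the dense smooth subalgebra one has $\|q_{s}(y)\|_{A_{sJ}}=\|y(s)\|_{sJ}$, and I would deduce continuity of $s\mapsto\|y(s)\|_{sJ}$ from Rieffel's analysis of the deformed norm in \cite{Rieffel1}: using Lemma \ref{RieffelProperties}\,(v), $A_{sJ}=(A_{s_{0}J})_{(s-s_{0})J}$, so continuity at an arbitrary $s_{0}$ reduces to continuity of the deformed norm as the deformation matrix tends to $0$, which is precisely the continuity statement established there. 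A density argument then extends continuity from smooth $y$ to all of $B_{J}$.

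I expect the main obstacle to be exactly this lower semicontinuity: unlike the algebraic fibre computation, it is not formal and rests on the oscillatory-integral estimates controlling $\|a\|_{K}$ as $K$ varies, which is the heart of Rieffel's continuous-field theorem. The secondary technical point is the clean identification $(I_{s}B)_{J}=I_{s}B_{J}$, which must be justified from $(\ref{deformFieldStructEq})$ rather than assumed, since it is what reconciles the ideal produced by Lemma \ref{RieffelProperties}\,(viii) with the fibre ideal of the $C([0,1])$-algebra structure.
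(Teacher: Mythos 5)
Your proposal is correct and follows essentially the same route as the paper: identify the fibre $(B_{J})_{s}$ via the $\beta$-equivariant exact sequence $0\to K^{s}\to B\to A\to 0$ together with the exactness and rescaling properties of Lemma \ref{RieffelProperties} (giving $A^{\sqrt{s}1}_{J}=A_{sJ}$), reconcile $K^{s}_{J}$ with the fibre ideal $I_{s}B_{J}$, and then defer the genuine continuity of $s\mapsto\|q_{s}(y)\|$ to Rieffel's continuous-field theorem (Theorem 8.3 of \cite{Rieffel1}), exactly as the paper does. Your additional remarks---that lower semicontinuity is the non-formal content and that $(I_{s}B)_{J}=I_{s}B_{J}$ needs justification---correctly locate the points the paper handles by citation.
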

\begin{proof} For each $s\in [0,1]$ let $K^{s} = I_{s}\otimes A$ be the ideal consisting of elements of $B = C([0,1],A)$ which vanish at the point $s$. Clearly, $B/K^{s} = A$. The short exact sequence
\[
\begin{CD}
0 @>>> K^{s} @>>> B @>>> A @>>> 0
\end{CD}
\]
is equivariant for $\beta$ acting on $K^{s}$ and $B$, and $\sigma^{\sqrt{s} 1}$ acting on $A$, so by Lemma $\ref{RieffelProperties}\,\,(vii)$ (cf. also Theorem 7.7 of \cite{Rieffel1}) we get a short exact sequence
\[
\begin{CD}
0 @>>> {K^{s}}_{J} @>>> B_{J} @>>> A^{\sqrt{s}1} _{J} @>>> 0
\end{CD}
\]

The fiber $(B_{J})_{s}$ over $s\in [0,1]$ of the $C([0,1])$-algebra $B_{J}$ is by definition the quotient $(B_{J})_{s} = B_{J}/(I_{s}B_{J})$. It is shown in \cite{Rieffel1} that ${K^{s}}_{J} = I_{s}B_{J}$, consequently $(B_{J})_{s} = B_{J}/(I_{s}B_{J}) = B_{J}/{K^{s}}_{J} = A^{\sqrt{s}1}_{J}$. Moreover, from Lemma $\ref{RieffelProperties}\,\,(viii)$ it follows that $A^{\sqrt{s}1}_{J} = A_{\sqrt{s}1J\sqrt{s}1} = A_{sJ}$, thus the bundle projection is $\pi_{s}: B_{J}\lra A_{sJ}$. Theorem 8.3 of \cite{Rieffel1} (see also Proposition 1.2 of \cite{Rieffel2}) establishes the continuity of the field $(A_{tJ})_{t\in [0,1]}$, for which $B_{J}$ is a maximal algebra of cross sections, henceforth denoted $\Gamma( (A_{tJ})_{t\in [0,1]} )$.
\end{proof}

Considering the *-isomorphism of Lemma $\ref{RieffelKasprzakApproach}$ at the level of bundles, we get
\begin{lemma}\label{Bundle-RieffelIsoKasprzak} The *-isomorphism 
\[ T: B^{\psi} \lra B_{J} \]
is $C([0,1])$-equivariant, i.e. $T\circ\Phi_{B^{\psi}} = \Phi_{B_{J}}\circ T$.
\end{lemma}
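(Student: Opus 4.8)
The plan is to reduce the claim to the explicit integral formula for $T$ from Lemma $\ref{RieffelKasprzakApproach}$ together with the two concrete descriptions of the $C([0,1])$-module structures. Recall that $T$ sends a Landstad element of $B^{\psi}$, represented on the dense subalgebra by a function $y\in C_{c}(\Rn,B^{\infty})\subseteq B\rtimes_{\beta}\Rn$, to $T(y)=\int_{\Rn}y(v)\,dv\in B_{J}$. The structure map $\Phi_{B^{\psi}}$ was defined to be $\Phi_{B\rtimes_{\beta}\Rn}$, so $\Phi_{B^{\psi}}(f)$ acts on such a $y$ pointwise in the crossed-product variable, $(\Phi_{B^{\psi}}(f)\cdot y)(v)=\Phi_{B}(f)\,y(v)$, whereas $\Phi_{B_{J}}(f)$ acts on $B_{J}$ by the deformed product $\Phi_{B_{J}}(f)\cdot z=\Phi_{B}(f)\times_{J}z$, which by $(\ref{deformFieldStructEq})$ coincides with the undeformed left multiplication $\Phi_{B}(f)\,z$. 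Since $T$ is a $*$-isomorphism, proving $C([0,1])$-equivariance is equivalent to verifying, for every $f\in C([0,1])$ and every $y$ in a dense subalgebra of $B^{\psi}$, the identity $T(\Phi_{B^{\psi}}(f)\cdot y)=\Phi_{B_{J}}(f)\cdot T(y)$.

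The computation is then immediate. First I would check that $\Phi_{B^{\psi}}(f)\cdot y$ stays in the dense class $C_{c}(\Rn,B^{\infty})$: it remains a Landstad element because $\Phi_{B^{\psi}}(f)$ is a central multiplier fixed by $\widehat{\beta}^{\psi}$ (as established when the $C([0,1])$-structure on $B^{\psi}$ was constructed), while pointwise it is $v\mapsto\Phi_{B}(f)\,y(v)$, which has the same compact support and lands in $B^{\infty}$ since $\Phi_{B}(f)\in M(B)^{\infty}$ multiplies smooth elements to smooth elements. Then
\[
T(\Phi_{B^{\psi}}(f)\cdot y)=\int_{\Rn}(\Phi_{B^{\psi}}(f)\cdot y)(v)\,dv=\int_{\Rn}\Phi_{B}(f)\,y(v)\,dv=\Phi_{B}(f)\int_{\Rn}y(v)\,dv=\Phi_{B}(f)\,T(y),
\]
and on the other side $\Phi_{B_{J}}(f)\cdot T(y)=\Phi_{B}(f)\times_{J}T(y)=\Phi_{B}(f)\,T(y)$ by $(\ref{deformFieldStructEq})$. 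The two expressions coincide, which gives the claim on the dense subalgebra and hence, by continuity of $T$ and of both structure maps, on all of $B^{\psi}$.

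The only points requiring genuine care — and where I expect the main, though mild, obstacle to lie — are the two identifications that legitimize the middle of the display. One is pulling the multiplier $\Phi_{B}(f)$ through the $\Rn$-integral defining $T$; this is justified because $\Phi_{B}(f)$ is a bounded central multiplier, so left multiplication by it is a bounded (strictly continuous) operation and commutes with the Bochner-type integral $\int_{\Rn}(\cdot)\,dv$ of a compactly supported continuous integrand. The other is the passage from the deformed product $\times_{J}$ to the undeformed one, which is exactly $(\ref{deformFieldStructEq})$, itself resting on $\Phi_{B}(f)$ being a $\overline{\beta}$-fixed smooth multiplier together with Lemma $\ref{RieffelProperties}\,(iii)$. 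Once these are in place the operator identity $T\circ\Phi_{B^{\psi}}=\Phi_{B_{J}}\circ T$ holds, and since $T$ is a $*$-isomorphism it extends to the multiplier level, giving $\overline{T}(\Phi_{B^{\psi}}(f))=\Phi_{B_{J}}(f)$ and thus the asserted $C([0,1])$-equivariance.
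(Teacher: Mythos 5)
Your proposal is correct and follows essentially the same route as the paper: both apply the explicit formula $T(y)=\int_{\Rn}y(v)\,dv$ to a Landstad element of $C_{c}(\Rn,B^{\infty})$, pull the central multiplier $\Phi_{B}(f)$ through the integral, and invoke $(\ref{deformFieldStructEq})$ to identify $\Phi_{B_{J}}(f)$ with undeformed multiplication by $\Phi_{B}(f)$. Your additional remarks on preserving the dense class and on justifying the interchange of multiplier and integral are sound elaborations of steps the paper leaves implicit.
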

\begin{proof} Let $b\in C_{c}(\Rn, B^{\infty}) \subset B\rtimes_{\beta}\Rn \subset M(B\rtimes_{\beta}\Rn)$ be an element such that $\widehat{\beta}^{\psi}_{x}(b) = b$ for all $x\in\Rn$, i.e. $b$ is an element of $B^{\psi}$. The *-isomorphism is described on such elements by
\[ T(b) = \int_{\Rn}b(v). \]
Furthermore
\begin{align*}
T(\Phi_{B^{\psi}}(f)b) &= \int_{\Rn} (\Phi_{B^{\psi}}(f)b)(v) \,dv = \int_{\Rn} (\Phi_{B\rtimes_{\beta}\Rn}(f)b)(v)\, dv \\
&= \int_{\Rn} \Phi_{B}(f)(b(v))\, dv = \Phi_{B}(f) \int_{\Rn} b(v) \, dv,
\end{align*}
and since $\Phi_{B_{J}} = \Phi_{B}$ as in $(\ref{deformFieldStructEq})$, the claim follows.
\end{proof}

\begin{theorem}\label{maintheorem} Let $h\in [0,1]$. The evaluation map
\[ \pi_{h}: \Gamma( (A_{tJ})_{t\in [0,1]} ) \lra A_{hJ} \]
is a KK-equivalence.
\end{theorem}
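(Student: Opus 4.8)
The plan is to transport the KK-fibration structure of $B^{\psi}$ across the equivariant isomorphism of Lemma $\ref{Bundle-RieffelIsoKasprzak}$. By Theorem $\ref{theoremRKKfibration}$ and Lemma $\ref{RKKfibKK}$ the $C([0,1])$-algebra $B^{\psi}$ is a KK-fibration, and $T:B^{\psi}\lra B_{J} = \Gamma((A_{tJ})_{t\in [0,1]})$ is a $C([0,1])$-equivariant $*$-isomorphism. The evaluation $\pi_{h}$ of the Rieffel bundle should then be conjugate, via $T$ and the induced fibre isomorphism, to the evaluation $q_{h}$ of $B^{\psi}$, which is already known to be a KK-equivalence; the statement then follows by composing with $*$-isomorphisms.

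First I would record that $q_{h}:B^{\psi}\lra (B^{\psi})_{h}$ is a KK-equivalence. This is precisely the KK-fibration property applied to the identity map $f=\mathrm{id}:\Delta^{1}=[0,1]\lra [0,1]$ and the point $h$, upon observing that the pullback along the identity is $f^{*}(B^{\psi}) = C([0,1])\otimes_{C([0,1])}B^{\psi}\cong B^{\psi}$, with fibre $(B^{\psi})_{h}$ over $h$.

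Next I would descend $T$ to the fibres. The equivariance $T\circ\Phi_{B^{\psi}} = \Phi_{B_{J}}\circ T$ implies that $T$ carries the generators $\Phi_{B^{\psi}}(f)b$ of $I_{h}B^{\psi}$ (for $f\in I_{h}$, $b\in B^{\psi}$) onto the generators $\Phi_{B_{J}}(f)T(b)$ of $I_{h}B_{J}$, so that $T(I_{h}B^{\psi}) = I_{h}B_{J}$ and $T$ induces a $*$-isomorphism $T_{h}:(B^{\psi})_{h}\lra (B_{J})_{h} = A_{hJ}$ on the quotients. By construction the square relating $q_{h}$, $\pi_{h}$, $T$ and $T_{h}$ commutes, i.e. $\pi_{h}\circ T = T_{h}\circ q_{h}$, whence $\pi_{h} = T_{h}\circ q_{h}\circ T^{-1}$. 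Passing to KK-classes gives $[\pi_{h}] = [T^{-1}]\cdot[q_{h}]\cdot[T_{h}]$ in $KK(B_{J},A_{hJ})$; since $[T^{-1}]$ and $[T_{h}]$ are invertible (being classes of $*$-isomorphisms) and $[q_{h}]$ is invertible by the first step, the product $[\pi_{h}]$ is a KK-equivalence.

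The only point demanding genuine care is the fibre-compatibility in the middle step: one must verify that the $C([0,1])$-equivariant $*$-isomorphism $T$ sends $I_{h}B^{\psi}$ exactly onto $I_{h}B_{J}$, so that it really descends to the fibre over $h$ and the conjugation $\pi_{h} = T_{h}\circ q_{h}\circ T^{-1}$ is valid. Beyond this the argument is formal, all of the substantive content having been absorbed into the RKK-fibration of Theorem $\ref{theoremRKKfibration}$ (via the Connes-Thom isomorphism of Theorem $\ref{ConnesThomKasparov}$) and the equivariant identification of Lemma $\ref{Bundle-RieffelIsoKasprzak}$.
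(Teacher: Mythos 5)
Your proposal is correct and follows essentially the same route as the paper: both arguments rest on Theorem \ref{theoremRKKfibration}, Lemma \ref{RKKfibKK}, the $C([0,1])$-linear isomorphism of Lemma \ref{Bundle-RieffelIsoKasprzak}, and the identity map of the $1$-simplex. The only (cosmetic) difference is that the paper first transports the RKK-fibration property to $B_{J}$ and then evaluates, so that $q_{h}=\pi_{h}$ on the nose, whereas you evaluate on $B^{\psi}$ first and then conjugate by $T$ and the induced fibre isomorphism $T_{h}$.
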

\begin{proof} As $\Gamma( (A_{tJ})_{t\in [0,1]} ) = B_{J}$ is $C([0,1])$-linearly *-isomorphic to $B^{\psi}$, and $B^{\psi}$ is an RKK-fibration (Theorem $\ref{theoremRKKfibration}$), thus $\Gamma( (A_{tJ})_{t\in [0,1]} )$ is an $RKK$-fibration and hence a $KK$-fibration (Lemma $\ref{RKKfibKK}$). So for any $f: \Delta^{p}\lra [0,1]$ and $v\in\Delta^{p}$, the quotient map $q_{v}: f^{*}(\Gamma( (A_{tJ})_{t} )) \lra A_{f(v)J}$ is a $KK$-equivalence. We take the identity function of the $1$-simplex, namely $f: \Delta^{1}=[0,1]\lra [0,1]$, $f(s) = s$. Then $f^{*}(\Gamma( (A_{tJ})_{t} )) = \Gamma( (A_{tJ})_{t} )$, $q_{h} = \pi_{h}$ and
\[ \pi_{h}: \Gamma( (A_{tJ})_{t} ) \lra A_{hJ } \]
is a $KK$-equivalence, for every $h\in [0,1]$. 
\end{proof}

\section{Comments}
\subsection{Theta deformation}
Here we discuss a special case of Rieffel deformation, namely \textit{theta deformation} and one possible variation to the above approach to KK-equivalence by bundle methods. Theta deformation concerns a separable C*-algebra $A$ on which there is a strongly continuous action of the $n$-torus, $\sigma: \ntorus\lra Aut(A)$, with a given skew-symmetric matrix $\theta \in M_{n}(\mb{R})$. This is just a special case of Rieffel deformation in which the $n$-torus is regarded as the quotient $\ntorus = \Rn/2\pi\Zn$, and one obtains the deformed algebra $A_{\theta}$. An alternative and perhaps more direct picture can be given by following \cite{Connes-DuboisViolette}. First define $C(\ntorus_{\theta})$ to be the unital C*-algebra generated by  unitaries $u_{1},\ldots, u_{n}$ with relations
\[ u_{j}u_{k} = e^{2\pi i \theta_{j,k}}u_{k}u_{j}, \mbox{ for } j, k = 1,\ldots, n. \]
(Note that this is just the Rieffel deformation $C(\ntorus)_{\theta}$ of the commutative C*-algebra $C(\ntorus)$ with respect to the translation action of the $n$-torus; the notation $C(\ntorus_{\theta})$ is suggestive of the terminology of "noncommutative manifolds" as in \cite{Connes-DuboisViolette}). On $C(\ntorus_{\theta})$ there is the action $\tau: \ntorus \lra Aut(C(\ntorus_{\theta}))$, $\tau_{s}(u_{j}) = e^{2\pi i s_{j}}u_{j}$, for $s\in\ntorus$. By considering the diagonal action $\sigma\otimes\tau^{-1}: \ntorus\lra Aut(A\otimes C(\ntorus_{\theta}))$ one defines the theta deformed algebra
\begin{equation}\label{thetaDeformation}
A_{\theta} = (A\otimes C(\ntorus_{\theta}))^{\sigma\otimes\tau^{-1}}
\end{equation}
as the fixed-point C*-subalgebra for this diagonal action.

We shall define a continuous C*-bundle over $[0,1]$ whose fiber over $t\in [0,1]$ will not be $A_{t\theta}$ per se, but will be strongly Morita equivalent to it. The benefit of this particular bundle will be that the evaluation map will easily be seen to yield a KK-equivalence element, and the remaining KK-equivalence is then given by the strong Morita equivalence. First we record the result we need regarding the strong Morita equivalence.

\begin{lemma} $A_{\theta} \sim_{M} A\rtimes_{\sigma}\ntorus\rtimes_{\gamma_{1}}\mb{Z}\rtimes\cdots\rtimes_{\gamma_{n}}\mb{Z}$.
\end{lemma}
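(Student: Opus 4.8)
The plan is to \emph{deform, dualise, and stabilise}, using the machinery assembled above together with Takai duality. First I would record that the fixed-point picture $(\ref{thetaDeformation})$ of the theta deformation coincides with the Kasprzak deformation of $A$ by the $\ntorus$-action $\sigma$ and the $2$-cocycle $\psi_{\theta}$ on $\widehat{\ntorus}=\Zn$ determined by $\theta$; that is, $A_{\theta}\cong A^{\psi_{\theta}}$ is the Landstad algebra of the $\ntorus$-product $(A\rtimes_{\sigma}\ntorus,\lambda,\widehat{\sigma}^{\psi_{\theta}})$. This is the compact-group analogue of Lemma $\ref{RieffelKasprzakApproach}$ (with $G=\ntorus$ in place of $\Rn$ and the discrete dual $\widehat{\ntorus}=\Zn$), and is exactly the identification between the Connes--Dubois-Violette fixed-point description and Rieffel's deformation. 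Conceptually, the equivalence to a crossed product is underpinned by the freeness of the gauge action $\tau$ on $C(\ntorus_{\theta})$, which makes the diagonal action $\sigma\otimes\tau^{-1}$ free; the role of the dualisation below is to pin down the specific iterated form.

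Granting this identification, Lemma $\ref{KasprzakLandstadTheorem}$ applied to $G=\ntorus$ yields a $*$-isomorphism
\[ A_{\theta}\rtimes_{\sigma}\ntorus \;\cong\; A\rtimes_{\sigma}\ntorus, \]
where the action carried by $A_{\theta}$ is $\sigma$ (cf. Lemma $\ref{RieffelProperties}$(vi) and the Kasprzak action $g\mapsto \lambda_{g}(\cdot)\lambda_{g}^{*}$). Next I would apply Takai duality to the crossed product on the left: since $\widehat{\ntorus}=\Zn$, the double crossed product by the dual action $\widehat{\sigma}$ satisfies
\[ (A_{\theta}\rtimes_{\sigma}\ntorus)\rtimes_{\widehat{\sigma}}\Zn \;\cong\; A_{\theta}\otimes\mc{K}(L^{2}(\ntorus)). \]
Transporting $\widehat{\sigma}$ along the isomorphism $A_{\theta}\rtimes_{\sigma}\ntorus\cong A\rtimes_{\sigma}\ntorus$ gives an action $\gamma$ of $\Zn$ on $A\rtimes_{\sigma}\ntorus$; writing $\gamma_{1},\ldots,\gamma_{n}$ for its $n$ commuting generators, the crossed product by $\Zn=\mb{Z}\times\cdots\times\mb{Z}$ decomposes as the iterated crossed product $A\rtimes_{\sigma}\ntorus\rtimes_{\gamma_{1}}\mb{Z}\rtimes\cdots\rtimes_{\gamma_{n}}\mb{Z}$. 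Combining the last two displays,
\[ A_{\theta}\otimes\mc{K}(L^{2}(\ntorus)) \;\cong\; A\rtimes_{\sigma}\ntorus\rtimes_{\gamma_{1}}\mb{Z}\rtimes\cdots\rtimes_{\gamma_{n}}\mb{Z}, \]
and since stable isomorphism implies strong Morita equivalence for separable C*-algebras, the claimed equivalence $A_{\theta}\sim_{M}A\rtimes_{\sigma}\ntorus\rtimes_{\gamma_{1}}\mb{Z}\rtimes\cdots\rtimes_{\gamma_{n}}\mb{Z}$ follows.

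The routine points are the decomposition of a crossed product by $\Zn$ into $n$ successive crossed products by $\mb{Z}$ (legitimate because the dual action is an honest action of the abelian group $\Zn$, so its generators commute and each extends to the preceding iterated crossed product) and the passage from stable isomorphism to Morita equivalence. The genuine obstacle is the first step: establishing, for the compact group $G=\ntorus$ with discrete dual $\Zn$, the identification of the fixed-point algebra $(A\otimes C(\ntorus_{\theta}))^{\sigma\otimes\tau^{-1}}$ with the Kasprzak/Landstad deformation $A^{\psi_{\theta}}$, so that Lemma $\ref{KasprzakLandstadTheorem}$ becomes applicable. This requires the $\ntorus$-analogue of the $\Rn$-results of Section $\ref{KasprzakApproach}$ and of Lemma $\ref{RieffelKasprzakApproach}$; all of the constructions there (the $G$-product, the Landstad conditions, the unitaries $U_{\chi}$, and the twisted dual action) go through for a general locally compact abelian $G$, so the identification holds, but it is the step to be verified rather than quoted.
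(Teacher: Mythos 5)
Your route is genuinely different from the paper's. The paper never passes through the Kasprzak picture in this section: it quotes Ng's theorem to obtain the strong Morita equivalence $(A\otimes C(\ntorus_{\theta}))^{\sigma\otimes\tau^{-1}}\sim_{M}(A\otimes C(\ntorus_{\theta}))\rtimes_{\sigma\otimes\tau^{-1}}\ntorus$ directly from the fixed-point description $(\ref{thetaDeformation})$, and then writes down an explicit $*$-isomorphism of that crossed product onto the iterated crossed product, $a\otimes u_{j}\otimes h\mapsto u_{j}(ah)$. Your deform--dualise--stabilise plan (Landstad isomorphism plus Takai duality plus stable isomorphism $\Rightarrow$ Morita equivalence) is a legitimate alternative in principle, but as written it has two genuine gaps. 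The first you flag yourself: the identification of the fixed-point algebra $(\ref{thetaDeformation})$ with the Landstad algebra $A^{\psi_{\theta}}$ is the entire content of the step that makes Lemma $\ref{KasprzakLandstadTheorem}$ applicable, and nothing in the paper supplies it --- Lemma $\ref{RieffelKasprzakApproach}$ is stated only for $G=\Rn$, and the general framework of Section $\ref{KasprzakApproach}$ does not by itself match its output to the Connes--Dubois-Violette picture. Announcing that ``the identification holds'' does not close the proof.

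The second gap is the one you dismiss as routine. The $\Zn$-action you get by transporting the dual action along $A_{\theta}\rtimes_{\sigma}\ntorus\cong A\rtimes_{\sigma}\ntorus$ is, by Theorem $\ref{theo1}$, the \emph{twisted} dual action $\widehat{\sigma}^{\psi_{\theta}}_{\chi}=U_{\chi}^{*}\widehat{\sigma}_{\chi}(\cdot)U_{\chi}$. Decomposing its honest $\Zn$-crossed product ``because the generators commute'' produces an iterated crossed product in which each successive automorphism extends by \emph{fixing} the previously adjoined unitaries; but the $\gamma_{j}$ named in the lemma restrict to the untwisted dual action on $A\rtimes_{\sigma}\ntorus$ and multiply the earlier unitaries by $e^{2\pi i\theta_{j,k}}$, so the adjoined unitaries satisfy $u_{j}u_{k}=e^{2\pi i\theta_{j,k}}u_{k}u_{j}$ rather than commuting. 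The two presentations are indeed isomorphic --- substitute $u_{j}=U_{e_{j}}w_{j}$ and use the cocycle commutation rule for the $U_{\chi}$ --- but that substitution is precisely the non-routine bookkeeping that makes the $\theta$-commutation appear, and omitting it leaves you with a Morita equivalence to an algebra that has not been identified with the one in the statement (whose specific presentation is used later in the section, e.g.\ in $(\ref{GammaActions})$).
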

\begin{proof} By results of $\cite{Ng}$ we get the strong Morita equivalence
\[ (A\otimes C(\ntorus_{\theta}))^{\sigma\otimes\tau^{-1}} \sim_{M} (A\otimes C(\ntorus_{\theta}))\rtimes_{\sigma\otimes\tau^{-1}}\ntorus. \]
The latter crossed product algebra is *-isomorphic to the crossed product in the statement of the lemma, which we now define. Let $\gamma_{1} \in Aut(A\rtimes_{\sigma}\ntorus)$ be $\gamma_{1}(g)(s) = e^{2\pi i s_{1}}g(s)$ for $g\in A\rtimes_{\sigma}\ntorus$ and let $u_{1}$ be the implementing unitary. Proceed inductively to define actions $\gamma_{2},\ldots,\gamma_{n}$ with implementing unitaries $u_{2},\ldots,u_{n}$ such that 
\begin{equation}\label{GammaActions}
\gamma_{j}(g)(s) = e^{2\pi i s_{j}}g(s), \quad \gamma_{j}(u_{k}) = e^{2\pi i \theta_{j,k}}u_{k}, \, j<k,
\end{equation}
so the covariance relation $\gamma_{j}(u_{k}) = u_{j}u_{k}u_{j}^{*} = e^{2\pi i \theta_{j,k}}u_{k}$ means precisely $u_{j}u_{k} = e^{2\pi i \theta_{j,k}}u_{k}u_{j}$. The *-isomorphism $ (A\otimes C(\ntorus_{\theta}))\rtimes_{\sigma\otimes\tau^{-1}}\ntorus \lra A\rtimes_{\sigma}\ntorus\rtimes_{\gamma_{1}}\mb{Z}\rtimes\cdots\rtimes_{\gamma_{n}}\mb{Z}$ can be explicitly described on the dense *-subalgebra $A\otimes C(\ntorus_{\theta})\otimes C(\ntorus)$ as $a\otimes u_{j}\otimes h \mapsto u_{j}(ah)$ where one understands $ah\in A\otimes C(\ntorus)\subseteq A\rtimes_{\sigma}\ntorus$.
\end{proof}

Let $B = C([0,1])\otimes A\rtimes_{\sigma}\ntorus = C([0,1], A\rtimes_{\sigma}\ntorus)$. We may decompose $\sigma$ into its coordinate actions $\sigma_{1},\ldots,\sigma_{n}$ where $\sigma_{j}(z) = \sigma_{(1,\ldots,z,\ldots,1)}$ for $z\in\torus$. For $j,k=1,\ldots,n$ let $h_{j,k}\in C([0,1])$ be the function
\[ h_{j,k}(t) = e^{2\pi i t\theta_{j,k}}. \]
Define $\alpha_{1}\in Aut(B)$ by
\[ \alpha_{1}(f\otimes g) = f\otimes \widehat{\sigma_{1}}(g), \quad f\in C([0,1]), g\in A\rtimes_{\sigma}\ntorus \]
and let $v_{1}$ be the unitary implementing $\alpha_{1}$ in $B\rtimes_{\alpha_{1}}\mb{Z}$. Define $\alpha_{2}\in Aut(B\rtimes_{\alpha_{1}}\mb{Z})$ by
\[ \alpha_{2}( (f\otimes g)v_{1}^{m} ) = (h_{1,2}^{m}f\otimes \widehat{\sigma_{2}}^{m}(g))v_{1}^{m}, \quad m\in\mb{Z}. \] 
Proceeding inductively we thus obtain actions $\alpha_{1},\ldots,\alpha_{n}$ with respective implementing unitaries $v_{1},\ldots,v_{n}$,
\[ \alpha_{k}( (f\otimes g)v_{j}^{m} ) = v_{k}( (f\otimes g)v_{j}^{m} )v_{k}^{*} = (h_{j,k}^{m}f\otimes\widehat{\sigma_{k}}^{m}(g))v_{j}^{m}. \]
Let $\pi_{t}: C([0,1])\otimes A\rtimes_{\sigma}\ntorus \lra A\rtimes_{\sigma}\ntorus$ be the evaluation map, $\pi_{t}(f\otimes g) = f(t)g$. 
For each $t\in [0,1]$, starting with $A\rtimes_{\sigma}\ntorus$ inductively define actions $\gamma_{1}^{t},\ldots,\gamma_{n}^{t}$ as in $(\ref{GammaActions})$ with respective unitaries $u_{1},\ldots,u_{n}$ such that 
\[ \gamma_{j}^{t}(g)(s) = e^{2\pi i s_{j}}g(s), \quad \gamma_{j}^{t}(u_{k}) = e^{2\pi i t\theta_{j,k}}u_{k}. \]
Note that the actions $\gamma_{j}$ of $(\ref{GammaActions})$ are just $\gamma_{j} = \gamma_{j}^{1}$ with $t=1$. Furthermore, $\pi_{t}\circ\alpha_{1} = \gamma_{1}^{t}\circ\pi_{t}$, i.e. $\pi_{t}$ is a $\mb{Z}$-equivariant *-homomorphism between the C*-dynamical systems and so passes to a *-homomorphism between the crossed products 
\begin{equation}\label{sec5eqBundleStep1}
\pi_{t}: (C([0,1])\otimes A\rtimes_{\sigma}\ntorus)\rtimes_{\alpha_{1}}\mb{Z} \lra A\rtimes_{\sigma}\ntorus\rtimes_{\gamma_{1}^{t}}\mb{Z},
\end{equation}
which is a continuous C*-bundle. Iterating this, one has $\pi_{t}\circ\alpha_{j} = \gamma_{j}^{t}\circ\pi_{t}$ for each $j=1,\ldots,n$, where $\pi_{t}$ is understood on the appropriate crossed product. Thus we get a continuous C*-bundle
\begin{equation}\label{sec5eqBundle}
\pi_{t}: C([0,1], A\rtimes_{\sigma}\ntorus)\rtimes_{\alpha_{1}}\mb{Z}\rtimes\cdots\rtimes_{\alpha_{n}}\mb{Z} \lra A\rtimes_{\sigma}\ntorus\rtimes_{\gamma_{1}^{t}}\mb{Z}\rtimes\cdots\rtimes_{\gamma_{n}^{t}}\mb{Z}.
\end{equation}

For each $t\in [0,1]$ let $I_{t} = \{f\in C([0,1]) \, \lvert \quad f(t) = 0\}$ be the ideal of functions vanishing at the point $t$. The ideal $I_{t}\otimes A\rtimes_{\sigma}\ntorus \subseteq C([0,1])\otimes A\rtimes_{\sigma}\ntorus$ is $\alpha_{1}$-invariant, so it follows that the kernel of the *-homomorphism $\pi_{t}$ in $(\ref{sec5eqBundleStep1})$ is
\[ ker\,\pi_{t} = (I_{t}\otimes A\rtimes_{\sigma}\ntorus)\rtimes_{\alpha_{1}}\mb{Z}. \]
By iteration, it follows that the kernel of the *-homomorphism $\pi_{t}$ in $(\ref{sec5eqBundle})$ is
\[ ker\,\pi_{t} = (I_{t}\otimes A\rtimes_{\sigma}\ntorus)\rtimes_{\alpha_{1}}\mb{Z}\rtimes\cdots\rtimes_{\alpha_{n}}\mb{Z}. \]

Using a homeomorphism of $[0,1]$ to itself, mapping $t$ to $1$, there is a *-isomorphism $I_{t} \cong C_{0}([0,1))$. This means $I_{t}\otimes A\rtimes_{\sigma}\ntorus \cong C_{0}([0,1))\otimes A\rtimes_{\sigma}\ntorus = Cone(A\rtimes_{\sigma}\ntorus)$, hence
\begin{equation}\label{kerCone}
ker\,\pi_{t} = Cone(A\rtimes_{\sigma}\ntorus)\rtimes_{\alpha_{1}}\mb{Z}\rtimes\cdots\rtimes_{\alpha_{n}}\mb{Z}.
\end{equation}

We recall a few general facts which we will appeal to shortly, in particular contractibility of cones and the Pimsner-Voiculescu six-term exact sequence. First, a C*-algebra $B$ is called \textit{KK-contractible} if $KK(B,B) = 0$. This also implies $KK(B,D) = 0 = KK(D,B)$ for any other C*-algebra $D$. 

Suppose there is an action $\beta\in Aut(B)$. The Pimsner-Voiculescu six-term exact sequence in KK-theory is
\[
\begin{CD}
KK(D,B) @>1-\beta_{*}>> KK(D,B) @>>> KK(D,B\rtimes_{\beta}\mb{Z}) \\
@AAA @. @VVV \\
KK^{1}(D, B\rtimes_{\beta}\mb{Z})  @<<< KK^{1}(D,B) @<<1-\beta_{*}< KK^{1}(D,B)
\end{CD}
\]
Observe that if $B$ is KK-contractible, then the six-term exact sequence reads
\[
\begin{CD}
0 @>1-\beta_{*}>> 0 @>>> KK(D,B\rtimes_{\beta}\mb{Z}) \\
@AAA @. @VVV \\
KK^{1}(D, B\rtimes_{\beta}\mb{Z})  @<<< 0 @<<1-\beta_{*}< 0
\end{CD}
\]
and using in particular $D = B\rtimes_{\beta}\mb{Z}$ we deduce $KK(B\rtimes_{\beta}\mb{Z}, B\rtimes_{\beta}\mb{Z}) = 0$, i.e. $B\rtimes_{\beta}\mb{Z}$ is KK-contractible.

Given any separable C*-algebra $D$, its cone $Cone(D) = C_{0}([0,1))\otimes D$ is KK-contractible.

\begin{theorem} For every $t\in [0,1]$ the bundle map
\[ \pi_{t}: C([0,1], A\rtimes_{\sigma}\ntorus)\rtimes_{\alpha_{1}}\mb{Z}\rtimes\cdots\rtimes_{\alpha_{n}}\mb{Z} \lra A\rtimes_{\sigma}\ntorus\rtimes_{\gamma_{1}^{t}}\mb{Z}\rtimes\cdots\rtimes_{\gamma_{n}^{t}}\mb{Z} \]
gives a KK-equivalence.
\end{theorem}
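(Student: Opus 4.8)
The plan is to read off the statement from the short exact sequence associated with $\pi_{t}$. Write $E = C([0,1], A\rtimes_{\sigma}\ntorus)\rtimes_{\alpha_{1}}\mb{Z}\rtimes\cdots\rtimes_{\alpha_{n}}\mb{Z}$ for the total algebra and $Q = A\rtimes_{\sigma}\ntorus\rtimes_{\gamma_{1}^{t}}\mb{Z}\rtimes\cdots\rtimes_{\gamma_{n}^{t}}\mb{Z}$ for the fibre, so that
\[ 0 \lra ker\,\pi_{t} \lra E \overset{\pi_{t}}{\lra} Q \lra 0 \]
is exact, with $ker\,\pi_{t}$ identified in $(\ref{kerCone})$ as an iterated $\mb{Z}$-crossed product of a cone. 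The argument then has two parts: first show that $ker\,\pi_{t}$ is KK-contractible, and then infer that a quotient map with KK-contractible kernel must be a KK-equivalence.

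For the first part I would induct along the tower of crossed products. The base case is $Cone(A\rtimes_{\sigma}\ntorus)$, which is KK-contractible as recalled above. The inductive step is precisely the consequence of the Pimsner-Voiculescu six-term sequence noted before the theorem: if a separable C*-algebra is KK-contractible, then so is its crossed product by $\mb{Z}$ under any automorphism. Applying this once for each of $\alpha_{1},\dots,\alpha_{n}$ to the identification $(\ref{kerCone})$ shows that $ker\,\pi_{t}$ is KK-contractible, i.e. $KK(ker\,\pi_{t}, D) = 0 = KK(D, ker\,\pi_{t})$ for every separable C*-algebra $D$.

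For the second part I would feed the extension into the six-term exact sequences of KK-theory in each variable. Since $ker\,\pi_{t}$ is KK-contractible, the groups involving it vanish and the sequences collapse to isomorphisms: right multiplication by $[\pi_{t}]$ gives isomorphisms $KK(D, E)\cong KK(D, Q)$, and left multiplication by $[\pi_{t}]$ gives isomorphisms $KK(Q, D)\cong KK(E, D)$, for every $D$. Taking $D = Q$ in the first family yields $\mb{y}\in KK(Q, E)$ with $\mb{y}[\pi_{t}] = \mb{1}_{Q}$; since right multiplication by $[\pi_{t}]$ is injective on $KK(E, E)$ and sends both $\mb{1}_{E}$ and $[\pi_{t}]\mb{y}$ to $[\pi_{t}]$, one also obtains $[\pi_{t}]\mb{y} = \mb{1}_{E}$. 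Hence $[\pi_{t}]$ is invertible and $\pi_{t}$ is a KK-equivalence.

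The step I expect to require the most care -- and the main obstacle -- is the availability of the six-term sequences, which hold for \emph{semi-split} extensions. Here this should cause no real difficulty: the underlying bundle evaluation $0\lra I_{t}\otimes(A\rtimes_{\sigma}\ntorus)\lra C([0,1], A\rtimes_{\sigma}\ntorus)\lra A\rtimes_{\sigma}\ntorus\lra 0$ is split by the *-homomorphism sending an element to its constant section, hence is semi-split, and since $\mb{Z}$ is amenable this property is inherited by the successive crossed products. The one point to verify is that the splitting, or a completely positive contractive substitute, can be taken compatibly with the tower of $\mb{Z}$-actions intertwined by $\pi_{t}$; granting this, the six-term sequences apply and the conclusion holds for every $t\in [0,1]$.
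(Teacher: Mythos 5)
Your proposal follows essentially the same route as the paper: identify $\ker\pi_{t}$ via $(\ref{kerCone})$ as an iterated $\mathbb{Z}$-crossed product of a cone, establish its KK-contractibility by induction using the Pimsner--Voiculescu sequence, and conclude that $\pi_{t}$ is a KK-equivalence. The only difference is that you spell out the final implication (semi-splitness of the extension and the collapse of the two six-term sequences), which the paper simply asserts; your elaboration of that step is correct.
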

\begin{proof} From $(\ref{kerCone})$ $ker\,\pi_{t} = Cone(A\rtimes_{\sigma}\ntorus)\rtimes_{\alpha_{1}}\mb{Z}\rtimes\cdots\rtimes_{\alpha_{n}}\mb{Z}$. Then the KK-contractibility of $Cone(A\rtimes_{\sigma}\ntorus)$ combined with a repeated Pimsner-Voiculescu six-term sequence argument as above establishes that $ker\,\pi_{t}$ is KK-contractible. This implies that $\pi_{t}$ gives a KK-equivalence element.
\end{proof}

\subsection{Invariance of the index}
The index pairing is the pairing between K-theory and K-homology
\[ K_{0}(A)\times K^{0}(A) \lra \mb{Z} \]
\begin{equation}\label{indexParingEq}
\langle [e], [(\mc{H},F)]\rangle = index\left(e(F^{+}\otimes 1_{k})e: e\mc{H}^{k} \lra e\mc{H}^{k}\right),
\end{equation}
for a projection $e\in M_{k}(A)$ and Fredholm module $(\mc{H},F)$ for $A$. This pairing is nothing but the KK-product
\begin{equation}\label{indexPairingKKprod}
KK(\mb{C},A) \times KK(A,\mb{C}) \lra KK(\mb{C},\mb{C}) 
\end{equation}
after the identifications $K_{0}(A) = KK(\mb{C},A)$, $K^{0}(A) = KK(A,\mb{C})$ and $KK(\mb{C},\mb{C}) = \mb{Z}$.

See also \cite{Yamashita} for a discussion of theta deformation and the invariance of the index, and moreover a calculation of the Chern character map for the deformation.

Given an even spectral triple $(A,\mc{H},D)$ there is the associated Fredholm module $(\mc{H}, F)$ with $F = D|D|^{-1}$.

Our separable C*-algebra $A$ is assumed equipped with an action $\sigma:  \ntorus\lra Aut(A)$, and let $\mc{A}\subseteq A$ be the dense *-subalgebra of smooth elements for the action. Suppose $(\mc{A},\mc{H},D)$ is a spectral triple, with a *-representation $\varphi: A\lra B(\mc{H})$. Assume the action to be unitarily implemented by $U:\ntorus \lra B(\mc{H})$, $\varphi(\sigma_{s}(a)) = U_{s}\varphi(a)U_{s}^{*}$, and that $U_{s}D = DU_{s}$ for each $s\in\ntorus$.

Theta deformation is an \textit{isospectral} deformation, meaning that the same data $(\mc{H}, D)$ which describes a noncommutative geometry for $A$, is also taken to serve a noncommutative geometry for $A_{\theta}$. In order to study these aspects, it is useful to work with the following picture of the deformation. Any element $a\in \mc{A}$ decomposes into a norm convergent series $a = \sum_{r\in\Zn} a_{r}$ where each $a_{r}\in \mc{A}$ satisfies $\sigma_{s}(a_{r}) = e^{-2\pi i r\cdot s}a_{r}$, for $s\in\ntorus$. Given two elements $a, b\in \mc{A}$ with decompositions $a = \sum_{r}a_{r}$ and $b=\sum_{p}b_{p}$, the product $\times_{\theta}$ takes the form
\begin{equation}\label{thetaProd}
a_{r}\times_{\theta}b_{p} = e^{2\pi i r\cdot\theta p}a_{r}b_{p} 
\end{equation}
between two component elements $a_{r}$ and $b_{p}$. The product $a\times_{\theta}b$ is then the linear extension of the componentwise product $(\ref{thetaProd})$. The *-algebra $\mc{A}_{\theta}$ is just $\mc{A}$ equipped with this product. The correspondence with the definition in $(\ref{thetaDeformation})$ is just
\[ a_{r} \mapsto a_{r} \otimes u_{1}^{r_{1}}\cdots u_{n}^{r_{n}} \in (A\otimes C(\ntorus_{\theta}))^{\sigma\otimes\tau^{-1}}. \]
We have a representation $\varphi_{\theta}$ of $\mc{A}_{\theta}$ on the same Hilbert space, $\varphi_{\theta}: \mc{A}_{\theta} \lra B(\mc{H})$, by
\[ \varphi_{\theta}(a) = \sum_{r} \varphi(a_{r})U_{q(\theta r)}, \]
$A_{\theta}$ is then the norm closure and $(\mc{A}_{\theta}, \mc{H}, D_{\theta})$ is the deformed spectral triple, where $D_{\theta} = D$.

For the even spectral triple $(A,\mc{H},D)$ we shall denote by $[D] = [(\mc{H}, \varphi, F)]\in K^{0}(A)$ the corresponding element of K-homology. Likewise we denote by $[D_{\theta}] = [(\mc{H}, \varphi_{\theta}, F)]\in K^{0}(A_{\theta})$ the element associated to the spectral triple $(A_{\theta},\mc{H},D_{\theta})$.

\begin{corollary} The KK-equivalence of Theorem $\ref{maintheorem}$ induces an isomorphism $K^{0}(A) \cong K^{0}(A_{\theta})$ mapping
 $[D] \mapsto [D_{\theta}]$.
\end{corollary}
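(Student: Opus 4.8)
The plan is to extract from Theorem \ref{maintheorem} the two KK-equivalences sitting at the endpoints of the field and to reduce the assertion to a single equality of pulled-back K-homology classes over the bundle algebra. Specializing the field to $J=\theta$ and regarding $\ntorus=\Rn/2\pi\Zn$ so that the torus action becomes a periodic $\Rn$-action, the field $(A_{t\theta})_{t\in[0,1]}$ has fibres $A_{0}=A$ and $A_{1}=A_{\theta}$ (Lemma \ref{RieffelProperties}(ii)). Write $\Gamma=\Gamma((A_{t\theta})_{t\in[0,1]})$. By Theorem \ref{maintheorem} the evaluations give invertible classes $[\pi_{0}]\in KK(\Gamma,A)$ and $[\pi_{1}]\in KK(\Gamma,A_{\theta})$, so $\textbf{x}=[\pi_{0}]^{-1}\cdot[\pi_{1}]\in KK(A,A_{\theta})$ is a KK-equivalence and the induced isomorphism $K^{0}(A)\to K^{0}(A_{\theta})$ is $\beta\mapsto\textbf{x}^{-1}\cdot\beta$ with $\textbf{x}^{-1}=[\pi_{1}]^{-1}\cdot[\pi_{0}]$. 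Since the pullback of a K-homology class along a $*$-homomorphism is KK-product with its class, $\pi_{0}^{*}[D]=[\pi_{0}]\cdot[D]$ and $\pi_{1}^{*}[D_{\theta}]=[\pi_{1}]\cdot[D_{\theta}]$, the desired identity $\textbf{x}^{-1}\cdot[D]=[D_{\theta}]$ is equivalent, after multiplying on the left by the invertible $[\pi_{1}]$, to
\[ \pi_{0}^{*}[D]=\pi_{1}^{*}[D_{\theta}] \quad\text{in } K^{0}(\Gamma). \]
Both sides are Fredholm modules on the \emph{same} pair $(\mc{H},F)$, namely $[(\mc{H},\varphi\circ\pi_{0},F)]$ and $[(\mc{H},\varphi_{\theta}\circ\pi_{1},F)]$, differing only through their representation.

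Next I would exhibit an explicit homotopy between these two modules, exploiting that the deformation is isospectral so that $F=D|D|^{-1}$ is constant along the field. For $t\in[0,1]$ put $\rho_{t}=\varphi_{t\theta}\circ\pi_{t}:\Gamma\to B(\mc{H})$, where on smooth elements $\varphi_{t\theta}(a)=\sum_{r}\varphi(a_{r})U_{q(t\theta r)}$; at the endpoints $\rho_{0}=\varphi\circ\pi_{0}$ (as $U_{q(0)}=1$) and $\rho_{1}=\varphi_{\theta}\circ\pi_{1}$. Assembling the family yields a candidate Kasparov $\Gamma$--$C([0,1])$-module
\[ \big(C([0,1],\mc{H}),\ \rho,\ 1\otimes F\big),\qquad (\rho(s)\xi)(t)=\rho_{t}(s)\,\xi(t), \]
whose evaluations $ev_{0},ev_{1}:C([0,1])\to\mb{C}$ return precisely the two classes above. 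Establishing that this is a genuine Kasparov module produces a homotopy in $KK(\Gamma,C([0,1]))$ between them, hence the required equality in $K^{0}(\Gamma)$.

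The main obstacle is verifying the Kasparov-module axioms for $(C([0,1],\mc{H}),\rho,1\otimes F)$, which amounts to two continuity statements in $t$. I would first check that $t\mapsto\rho_{t}(s)$ is bounded and strictly continuous, so that $\rho(s)\in\mc{L}_{C([0,1])}(C([0,1],\mc{H}))=C_{b,\mathrm{strict}}([0,1],B(\mc{H}))$; and second that $t\mapsto[F,\rho_{t}(s)]$, $(F^{2}-1)\rho_{t}(s)$ and $(F-F^{*})\rho_{t}(s)$ are norm-continuous with values in $\mc{K}(\mc{H})$, so that they lie in $\mc{K}_{C([0,1])}(C([0,1],\mc{H}))=C([0,1],\mc{K}(\mc{H}))$. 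Fibrewise compactness is immediate because $U$ commutes with $D$, hence with $F$, giving $[F,\varphi(a_{r})U_{q(t\theta r)}]=[F,\varphi(a_{r})]U_{q(t\theta r)}$ with $[F,\varphi(a_{r})]\in\mc{K}(\mc{H})$ from the original spectral triple — exactly the mechanism making $(\mc{A}_{t\theta},\mc{H},D)$ a spectral triple for each $t$. The delicate point is the uniform control in $t$ needed for continuity: I would prove it first for smooth $s$, where the homogeneous expansion of $\pi_{t}(s)$ is rapidly convergent and $t\mapsto U_{q(t\theta r)}$ is strongly continuous, using the rapid decay of $\|(\pi_{t}s)_{r}\|$ to dominate the series uniformly in $t$, and then pass to general $s\in\Gamma$ by density and boundedness together with the continuity of the field $(A_{t\theta})_{t}$.
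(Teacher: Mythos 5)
Your proposal follows essentially the same route as the paper: both reduce the claim to the single identity $[\pi_{0}]\cdot[D]=[\pi_{1}]\cdot[D_{\theta}]$ in $KK(\Gamma,\mb{C})$ and establish it by the homotopy $(C([0,1],\mc{H}),\rho,F)$ with $\rho_{t}=\varphi_{t\theta}\circ\pi_{t}$, which is exactly the cycle $(E,\phi,F)$ over $I\mb{C}$ used in the paper. Your outline of the continuity and compactness verifications correctly fills in the details the paper delegates to Connes--Landi, so the argument is sound.
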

\begin{proof} Let $\Gamma = \Gamma( (A_{t\theta})_{t\in[0,1]} )$. From the bundle maps $\pi_{0}: \Gamma\lra A$ and $\pi_{1}: \Gamma\lra A_{\theta}$ we get by Theorem $\ref{maintheorem}$ the KK-equivalence elements $[\pi_{0}]\in KK(\Gamma, A)$ and $[\pi_{1}]\in KK(\Gamma, A_{\theta})$. The relevant mappings between KK-groups is described by the KK-products
\[
\begindc{\commdiag}
\obj(3,3)[L1]{$KK(\Gamma,\mb{C})$}
\obj(1,1)[L2]{$KK(A,\mb{C})$}
\obj(5,1)[L3]{$KK(A_{\theta},\mb{C})$}
\mor{L2}{L1}{\scriptsize$[\pi_{0}]\cdot$}
\mor{L3}{L1}{\scriptsize$[\pi_{1}]\cdot$}[-1,0]
\mor{L2}{L3}{\scriptsize$[\pi_{1}]^{-1}\cdot [\pi_{0}]\cdot$}[-1,1]
\enddc
\]
where $[\pi_{0}] = [(A, \pi_{0}, 0)] \in KK(A,\mb{C})$ and $[\pi_{1}] = [(A_{\theta}, \pi_{1}, 0)]\in KK(\Gamma,A_{\theta})$ are the KK-cycle descriptions.

The element $[D] = [(\mc{H},\varphi, F)]\in KK(A,\mb{C})$ is the element canonically associated to the given spectral triple $(A,\mc{H},D)$ as explained above, and upon taking the KK-product we get
\begin{equation}\label{indexEq1}
[\pi_{0}]\cdot [D] = [(\mc{H},\varphi\circ\pi_{0}, F)]\in KK(\Gamma, \mb{C}).
\end{equation}
Likewise $[D_{\theta}] = [(\mc{H}, \varphi_{\theta}, F)]\in KK(A_{\theta},\mb{C})$ is the element associated to the deformed spectral triple $(A_{\theta},\mc{H},D_{\theta})$, and the KK-product is then
\begin{equation}\label{indexEq2}
[\pi_{1}]\cdot [D_{\theta}] = [(\mc{H},\varphi\circ\pi_{1}, F)]\in KK(\Gamma, \mb{C}).
\end{equation}
It will be enough to establish the equality $[\pi_{0}]\cdot [D] = [\pi_{1}]\cdot [D_{\theta}]$ in $KK(\Gamma,\mb{C})$. This follows from homotopy of KK-cycles.
Indeed, let $(E,\phi, F)\in KK(\Gamma, I\mb{C})$ be the element where $E = C([0,1],\mc{H})$, $\phi: \Gamma\lra \mc{L}_{I\mb{C}}(E)$, $(\phi(s)\xi)(t) = s(t)\xi(t)$, and $I\mb{C} = C([0,1])\otimes\mb{C} = C([0,1])$. Let $ev_{0}$ and $ev_{1}$ denote the respective evaluation morphisms $E\lra \mc{H}$. It is easy to check (using details explained in \cite{Connes-Landi}) that $(E,\phi,F)$ provides a homotopy between the KK-cycles $(\ref{indexEq1})$ and $(\ref{indexEq2})$, i.e. isomorphisms of the KK-cycles with the pushouts of $ev_{0}$ and $ev_{1}$ respectively,
\[ (E_{ev_{0}}, \phi_{ev_{0}}, F_{ev_{0}}) \cong [(\mc{H},\varphi\circ\pi_{0}, F)] \quad\mbox{ and }\quad (E_{ev_{1}}, \phi_{ev_{1}}, F_{ev_{1}}) \cong [(\mc{H},\varphi\circ\pi_{1}, F)]. \]
\end{proof}

The KK-equivalence of Theorem $\ref{maintheorem}$ implies the isomorphisms
\[ K_{0}(A) = KK(\mb{C}, A) \lra KK(\mb{C},A_{\theta}) = K_{0}(A_{\theta}), \quad [e] \longmapsto [e]\cdot [\pi_{0}]^{-1}\cdot [\pi_{1}], \]
and
\[ K^{0}(A) = KK(A, \mb{C}) \lra KK(A_{\theta},\mb{C}) = K^{0}(A_{\theta}), \quad [(\mc{H},F)] \longmapsto [\pi_{1}]^{-1}\cdot [\pi_{0}]\cdot [(\mc{H},F)], \]
and regarding the index pairing $(\ref{indexParingEq})$ or equivalently the KK-product $(\ref{indexPairingKKprod})$, we get  
\[
\begin{CD}
K_{0}(A)\times K^{0}(A) @>\mbox{index}>> \mb{Z} \\
@VVV @VV{\lvert\lvert}V \\
K_{0}(A_{\theta})\times K^{0}(A_{\theta}) @>>\mbox{index}> \mb{Z}
\end{CD}
\]
where $[e]\cdot [(\mc{H},F)]$ is the top index pairing and 
\[ [e]\cdot [\pi_{0}]^{-1}\cdot [\pi_{1}] \cdot [\pi_{1}]^{-1}\cdot [\pi_{0}]\cdot [(\mc{H},F)] = [e]\cdot [(\mc{H},F)] \]
is the bottom index pairing after having followed the isomorphisms induced by the KK-equivalences.

\newpage

\author{Amandip Sangha\\
Department of Mathematics,\\
  University of Oslo,\\
  PO Box 1053 Blindern,\\
  N-0316 Oslo, Norway.\\
  \texttt{amandip.s.sangha@gmail.com}}


\end{document}